\newtheorem{theorem}{Theorem}[section]
\newtheorem{lemma}[theorem]{Lemma}
\newtheorem{proposition}[theorem]{Proposition}
\newtheorem{corollary}[theorem]{Corollary}
\theoremstyle{definition}
\theoremstyle{remark}
\newtheorem{remark}[theorem]{Remark}
\numberwithin{equation}{section}
\newcommand{\N}{\mathcal N}
\newcommand{\B}{\mathcal B}
\newcommand{\F}{\mathcal F}
\newcommand{\A}{\mathcal A}
\numberwithin{equation}{section}
\begin{document}
\title [The principle of local reflexivity and an extension  of ...]{The principle of local reflexivity and an extension  of  the identity $ \mathcal B(E,X^{**})\cong \B(E,X)^{**}$}
\author[R. Faal]{Ramin  Faal}
\author[H.R. Ebrahimi Vishki]{Hamid Reza  Ebrahimi Vishki}
\address{ Department of Pure Mathematics, Ferdowsi University of Mashhad, P.O. Box 1159, Mashhad 91775, IRAN.}
\email{faal.ramin@yahoo.com}
\address{ Department of Pure Mathematics and Center of Excellence in Analysis on Algebraic Structures (CEAAS), Ferdowsi University of Mashhad, P.O. Box 1159, Mashhad 91775, IRAN.}
\email{vishki@um.ac.ir}
\begin{abstract} By using  the Principle of Local Reflexivity (PLR), we prove  that for every two Banach spaces $E$ and $X$ there exists a suitable  ultrafilter $\mathcal{U}$ such  that $ \mathcal{F}(E,X)^*,$ the dual space  of the finite rank operators, can be isomorphically identified with certain quotient of the ultrapower space $(E\widehat{\otimes} X^*)_\mathcal{U}$, of the projective tensor product space $E\widehat{\otimes} X^*.$  This generalizes  the   identity $\mathcal B(E,X^{**})\cong\mathcal B(E,X)^{**}$, where  $E$ is finite-dimensional. We then serve our main result  to improve some    results on the reflexivity of $\B(E,X)$, the space of all bounded linear operators,  by showing that: if $\B(E,X)$ is reflexive, then $\B(E,X)=\A(E,X)$, the  space of all approximable  operators. This particularly implies that,  $\B(E)$ is reflexive if and only if $E$ is finite-dimensional.   Finally, as   more  by-products of the PLR, some    generalizations of the classical   Goldstine weak$^*$-density theorem are also included.
\end{abstract}
\subjclass[2020]{46B07; 46B10; 47L10}
\keywords{Bounded  operator;  dual space; weak$^*$-topology; local reflexivity; ultrafilter.}
\maketitle
\section{Introduction}
The classical Dean identity, that was first found in the theory of tensor product of Banach spaces in \cite{Gr,SC}, reads as follows:
\begin{theorem}\label{Dean Identity}{\rm (Dean identity, \cite{D}).}
Let $E$ be a finite-dimensional normed space and let $X$ be a Banach space. Then $ \mathcal B(E,X^{**})\cong\mathcal B(E,X)^{**}$, where $\mathcal B(E,X)$ denotes the space of all bounded linear operators from $E$ to $X$.
\end{theorem}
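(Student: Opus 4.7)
The plan is to exploit the finite dimensionality of $E$ by choosing a basis, which turns $\mathcal{B}(E,X)$ into a finite Cartesian power of $X$ (and similarly for $X^{**}$), and then to invoke the elementary fact that the bidual commutes with finite direct sums.

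First, I would fix a basis $e_1,\ldots,e_n$ of $E$ and consider the evaluation map
\[
\Phi\colon \mathcal{B}(E,X)\longrightarrow X^n,\qquad \Phi(T)=(T(e_1),\ldots,T(e_n)),
\]
which is a linear bijection. Because $\dim E<\infty$, any two norms on $\mathcal{B}(E,X)$ are equivalent, so $\Phi$ is a topological isomorphism onto $X^n$ endowed, say, with the $\ell^\infty$-product norm. The same construction, applied to the target $X^{**}$, yields $\mathcal{B}(E,X^{**})\cong (X^{**})^n$.

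Next, I would invoke the standard duality for finite direct sums: the dual of the $\ell^\infty$-sum is the $\ell^1$-sum and conversely, so $(X^n)^{*}\cong (X^*)^n$ topologically. Iterating gives the canonical identification $(X^n)^{**}\cong (X^{**})^n$, where each summand is mapped by its natural embedding into its bidual. Chaining the three isomorphisms,
\[
\mathcal{B}(E,X)^{**}\cong (X^n)^{**}\cong (X^{**})^n\cong \mathcal{B}(E,X^{**}),
\]
produces the desired identity.

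The step that deserves a moment's care is verifying that the resulting isomorphism is the \emph{natural} map, namely $\Lambda\mapsto\widetilde{\Lambda}$ where $\widetilde{\Lambda}(x)=\delta_x^{**}(\Lambda)\in X^{**}$ and $\delta_x\colon \mathcal{B}(E,X)\to X$ denotes evaluation at $x\in E$. This is what shows the identification is basis-free and makes clear why the phenomenon is genuinely finite-dimensional: the entire reduction collapses in infinite dimensions because $\mathcal{B}(E,X)$ no longer splits as a finite power of $X$, and it is precisely this gap that the main theorem of the paper, via the Principle of Local Reflexivity and an ultrapower of $E\widehat{\otimes}X^*$, is designed to bridge. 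Since the statement only asserts an isomorphism, this naturality check is not strictly required, but it is the conceptual point the rest of the paper builds upon.
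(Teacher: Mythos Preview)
Your argument is correct in outline and takes a route quite different from the paper's. You reduce $\mathcal B(E,X)$ to $X^n$ via a basis of $E$ and then invoke the commutation of the bidual with finite direct sums. The paper instead recovers the identity as Corollary~\ref{SD}, a special case of its ultrapower machinery: when $\dim E<\infty$ the ultrafilter in Theorem~\ref{ultrafilter} is principal (Remark~\ref{RS}(a)), so $(E\widehat{\otimes}X^*)_{\mathcal U}/W_0$ collapses to $(E\widehat{\otimes}X^*)/W_0$, and the PLR is invoked to show $W_0=\{0\}$; this yields the intermediate isometry $E\widehat{\otimes}X^*\cong\mathcal B(E,X)^*$, and dualizing gives Dean's identity. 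Your approach is far more elementary and self-contained; the paper's is a by-product of a framework built for infinite-dimensional $E$, and it delivers the sharper statement $\mathcal B(E,X)^*\cong E\widehat{\otimes}X^*$ along the way.

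Two small repairs are needed. First, the sentence ``any two norms on $\mathcal B(E,X)$ are equivalent'' is false when $\dim X=\infty$, since $\mathcal B(E,X)$ is then infinite-dimensional. What you actually need is only that $\Phi$ and $\Phi^{-1}$ are bounded, which follows at once from the boundedness of the coordinate functionals on the finite-dimensional space $E$ (or from the open mapping theorem). Second, in this paper $\cong$ means \emph{isometric} isomorphism, whereas your basis argument, as written, produces only a topological one: the map $\Phi$ is not an isometry. To upgrade, you must carry out the naturality check you allude to---that the composite is the canonical map $\Lambda\mapsto(e\mapsto\delta_e^{**}\Lambda)$---and then verify directly that this map preserves norms; that last verification is essentially equivalent to identifying $\mathcal B(E,X)^*$ with $E\widehat{\otimes}X^*$, which is exactly the content the paper isolates in Corollary~\ref{SD}.
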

This identity has been particularly fruitful because it can be easily adapted to
different categorical contexts by replacing the bounded linear operators with some other family of maps. 
To the best of our knowledge, the most technical proof of Theorem \ref{Dean Identity}, based on the nontensor product techniques, was given by Dean \cite{D}, and this is why, it was termed Dean identity in \cite{CG}.
Dean  \cite{D} also served  this  identity to provide a  proof for  the well-known principle of local reflexivity,  stating that the second dual $X^{**}$ of a Banach space $X$ ``locally" behaves the same as $X$ itself, that  initially established by Lindenstrauss and Rosenthal \cite{LR}.
\begin{theorem}{\rm (Principle of Local Reflexivity (PLR), \cite{LR}).}\label{PLR}
Let $X$ be a Banach space and let $E$ and $ F$ be finite-dimensional subspaces of $X^{**}$ and $X^*,$ respectively. Then for each $\varepsilon>0$ there exists a one-to-one linear map $S:E\to X$ such that $\|S\|, \|S^{-1}\|<1+\varepsilon,$ $S|_{E\cap X}=id_{E\cap X}$ and $f(S(e))=e(f)$ for all $e\in E$ and $f\in F.$
\end{theorem}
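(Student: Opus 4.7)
The plan is to derive the PLR from Dean's identity (Theorem~\ref{Dean Identity}) combined with Goldstine's weak$^*$-density theorem and a Hahn--Banach correction step. The starting observation is that, since $E$ is finite-dimensional, the natural inclusion $\iota \colon E \hookrightarrow X^{**}$ is an isometric element of $\mathcal{B}(E,X^{**})$, which under Theorem~\ref{Dean Identity} corresponds to some $\Phi \in \mathcal{B}(E,X)^{**}$ with $\|\Phi\|=1$. The proof then consists of weakly$^*$-approximating $\Phi$ by operators of norm $\leq 1$, and tweaking the approximation so as to enforce the exact pairing on $F$ while preserving norm control and injectivity.

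I would first make two preparatory reductions. If $E_0 := E \cap X \neq \{0\}$, pick a finite-dimensional algebraic complement $E_1$ in $E$; one checks that if an operator $S_1\colon E_1 \to X$ satisfies the desired pairing against $F$, then $S := \mathrm{id}_{E_0} \oplus S_1$ satisfies it on all of $E$, with the norm of the projection onto $E_1$ merely changing the $\varepsilon$-budget by a bounded factor. Thus one may assume $E \cap X = \{0\}$. Next, enlarge $F$ to a finite-dimensional superspace $F' \supseteq F$ which is $(1-\varepsilon/4)$-norming on $E$: using a $\delta$-net on the compact sphere $S_E$ and Hahn--Banach, choose finitely many unit functionals $\{g_\alpha\}\subset X^*$ with $\sup_\alpha |e(g_\alpha)| \geq (1-\varepsilon/4)\|e\|$ for every $e \in E$.

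With these reductions in hand, fix bases $e_1,\ldots,e_n$ of $E$ and $f_1,\ldots,f_k$ of $F$. Applying Goldstine to the identification $\mathcal{B}(E,X)^{**} \cong \mathcal{B}(E,X^{**})$, for every $\eta>0$ there exists $S_0 \in \mathcal{B}(E,X)$ with $\|S_0\|\leq 1$ satisfying $|f(S_0(e_i))-e_i(f)| < \eta$ for every $i$ and every $f$ in the finite set $\{f_1,\ldots,f_k\}\cup\{g_\alpha\}$. By linearity in $e$ and the norming property of the $g_\alpha$, this forces $\|S_0(e)\|_X \geq (1-\varepsilon/2)\|e\|_E$ for all $e \in E$ once $\eta$ is small enough, hence $S_0$ is injective with $\|S_0^{-1}\| < 1+\varepsilon/2$.

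It remains to upgrade the approximate pairing to the exact equality $f_j(S(e_i))=e_i(f_j)$. The linear map $\mathcal{B}(E,X) \to \mathbb{C}^{nk}$, $T \mapsto \bigl(f_j(T(e_i))-e_i(f_j)\bigr)_{i,j}$, is surjective (lifting prescribed values coordinate by coordinate via Hahn--Banach, as $\{f_j\}$ may be assumed independent) and admits a right inverse of some finite norm $C$ depending only on $F$ and $\{e_i\}$. Thus one corrects by an operator $R$ with $\|R\| \leq C n k \eta$, and $S := S_0 + R$ achieves the exact pairing with $\|S\|, \|S^{-1}\| < 1+\varepsilon$ provided $\eta$ is chosen small enough; a final infinitesimal perturbation (if needed) secures injectivity without disturbing the bounds. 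The main obstacle I foresee is not any single step but the bookkeeping: the four constraints (upper norm, co-norm, exact pairing, identity on $E\cap X$) are coupled, and each preparatory parameter (norming threshold, Goldstine tolerance $\eta$, correction constant $C$) must be fixed in the correct order so that the final correction does not spoil the injectivity or the norm bounds established earlier.
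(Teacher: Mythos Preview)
The paper does not give its own proof of Theorem~\ref{PLR}; it is quoted from \cite{LR}, and the paper merely remarks that Dean \cite{D} derived it from Theorem~\ref{Dean Identity}. Your proposal is essentially a reconstruction of Dean's argument, so there is no ``paper's proof'' to compare against beyond that citation. The core of your strategy (Goldstine on $\mathcal{B}(E,X)^{**}\cong\mathcal{B}(E,X^{**})$ applied to the inclusion $\iota$, a norming enlargement of $F$ to force injectivity, and a finite-rank correction via a bounded right inverse) is sound and is indeed how Dean proceeds.

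There is, however, a genuine gap in your first reduction. Writing $E=E_0\oplus E_1$ with $E_0=E\cap X$ and defining $S:=\mathrm{id}_{E_0}\oplus S_1$ does \emph{not} yield $\|S\|<1+\varepsilon$ from $\|S_1\|<1+\varepsilon$, and the defect is not merely a constant multiple of $\varepsilon$. Take $e_0\in X$, $e_1\in X^{**}\setminus X$ of norm $1$ with $\|e_0+e_1\|=\delta$ small. Your $S_1$ sends $e_1$ to some $z\in X$ with $\|z\|\approx 1$, but nothing in the PLR applied to $E_1$ alone controls $\|e_0+z\|$; generically it is of order $1$, so $\|S(e_0+e_1)\|/\|e_0+e_1\|$ is of order $1/\delta$, not $1+C\varepsilon$. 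The pairing and norming conditions give only a \emph{lower} bound on $\|S(e)\|$; the upper bound $\|S_0\|\le 1$ in Dean's scheme comes from applying Goldstine to $\iota$ on the \emph{whole} space $E$, not on $E_1$.

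The standard remedy is to drop the reduction and keep the constraint $S|_{E_0}=\mathrm{id}$ inside the approximation/correction step. Apply Goldstine to $\iota\in\mathrm{Ball}(\mathcal{B}(E,X^{**}))$ to obtain $S_0\in\mathrm{Ball}(\mathcal{B}(E,X))$ approximating $\iota$ on enough functionals; since for $e_0\in E_0$ one has $\iota(e_0)=\kappa_X(e_0)$, the approximation forces $S_0(e_0)$ to be norm-close to $e_0$ (after including sufficiently many functionals norming $E_0$). Then fold the finitely many linear conditions ``$S(e_0^{(k)})=e_0^{(k)}$'' for a basis of $E_0$ into the same surjective linear system you already use for the exact pairing, and correct by a small $R$ as before. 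This is exactly the bookkeeping difficulty you anticipated, but it cannot be sidestepped by the direct-sum reduction.
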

The PLR is a fundamental (and very useful) result in the theory of Banach
spaces. Many improvements and generalizations on this principle have been given in the literature; see, for example, \cite{B,CD,JL,MA,O,OP} and references therein.\smallskip

 A constructive proof for the PLR was given by Dean,  \cite[Theorem 1]{D}, in which he nicely served  his identity (Theorem \ref{Dean Identity}). Here, however, we focus on the reverse direction and derive a generalized version of Dean identity, whose proof is basically relied on the PLR.

In Section 2, we  prove Theorem \ref{ultrafilter}, which  is an extension of  Theorem \ref{Dean Identity} (see Corollary \ref{SD}). In Section 3 we focus on the reflexivity of $\B(E,X)$. We actually serve  Theorem \ref{ultrafilter}  to present Theorem \ref{finite} improving some  results of Holub \cite{H} and Kalton \cite{K}. In particular, we derive Corollary \ref{finite1} characterizing the reflexivity of $\B(E)$. We then  serve  Theorem \ref{ultrafilter} to give  Theorem \ref{CSD} and Corollary \ref{CSD1}, which can be viewed  as some converses of Corollary \ref{SD}.  Section 4 is devoted to some by-products of the PLR, presenting some extensions of the Goldstine weak$^*$-density theorem.
\section{An extension  of Dean identity}
In this section, we extend Theorem \ref{Dean Identity} by removing the restriction $``\dim(E)<\infty"$. Roughly speaking, we show that for every two Banach spaces $E$ and $X$, an ultrapower $(E\widehat{\otimes}X^*)_{\mathcal U}$ of $E\widehat{\otimes}X^*$ can be identified with $\mathcal F(E,X)^*,$ where $\F(E,X)$ stands for the finite rank operators in $\mathcal B(E,X).$\smallskip

Let us first provide some prerequisites on the ultrapowers of a Banach space. Let $X$ be a Banach space, let $I$ be an indexing set, and let $\mathcal{U}$ be an ultrafilter on $I.$ We define the ultrapower $X_\mathcal{U}$ of $X$ with respect to $\mathcal{U},$ by the quotient space
$X_\mathcal{U}=\ell^\infty(X,I)/\N_\mathcal{U},$
where $\ell^\infty(X,I)$ is the Banach space
\[\ell^\infty(X,I)=\{(x_{\alpha})_{\alpha\in I}\subseteq X: \|(x_{\alpha})\|=\sup_{\alpha\in I}\|x_{\alpha}\|<\infty\}\]
and $\N_\mathcal{U}$ is the closed subspace $\N_\mathcal{U}=\{(x_{\alpha})_{\alpha\in I}\in\ell^\infty(X,I): \lim_{\mathcal{U}}\|x_{\alpha}\|=0\}.$
Then the quotient norm coincides with the norm $\|(x_{\alpha})\|_\mathcal{U}:=\lim_\mathcal{U}\|x_{\alpha}\|$,
and $X$ can be identified with a closed subspace of $X_\mathcal{U}$ via the canonical isometric embedding $x\mapsto (x)_\mathcal{U}.$ There is also a norm-decreasing map $\sigma : X_\mathcal{U}\rightarrow X^{**}$ such that
$\sigma ( (x_\alpha)_\mathcal{U}) = w^*-\lim_\mathcal{U} \kappa_X (x_\alpha), (x_\alpha)_\mathcal{U}\in X_\mathcal{U},$
where $\kappa_X $ is the canonical embedding of $X$ into $X^{**}$. Ample information about ultrapowers, with more details, can be found in \cite{He}. We quote the following known result, which will be needed in the proof of Theorem \ref{ultrafilter}.


\begin{proposition} [{Heinrich, \cite[Proposition 6.7]{He}}]\label{add}
Let $X$ be a Banach space. Then there exist an ultrafilter $\mathcal{U}$ and a linear isometric embedding $K : X^{**}\rightarrow X_\mathcal{U}$ such that $\sigma \circ K$ is the identity on $X^{**}$ and $K\circ\kappa_X$ is the canonical embedding of $X$ into $X_\mathcal{U}.$ Thus $K\circ\sigma$ is a norm-one projection of $X_\mathcal{U}$ onto $K(X^{**})$.
\end{proposition}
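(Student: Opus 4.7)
The plan is to use the Principle of Local Reflexivity as the engine and package its local maps into a single global embedding via an ultrafilter rich enough to ``see'' every finite-dimensional sub-datum of $X^{**}$ and $X^*$ simultaneously.

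First, I would take as index set $I$ the collection of triples $\alpha=(E,F,\varepsilon)$, where $E$ is a finite-dimensional subspace of $X^{**}$, $F$ a finite-dimensional subspace of $X^*$, and $\varepsilon\in(0,1)$, partially ordered by $(E_0,F_0,\varepsilon_0)\preceq(E,F,\varepsilon)$ whenever $E_0\subseteq E$, $F_0\subseteq F$, and $\varepsilon\le\varepsilon_0$. The upward-closed sets form a filter base that I extend to an ultrafilter $\mathcal{U}$ on $I$; its essential feature is that for any prescribed finite collection of vectors in $X^{**}$ and $X^*$ and any positive threshold on $\varepsilon$, the corresponding upper set belongs to $\mathcal{U}$.

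Next, for each $\alpha$ I would invoke Theorem \ref{PLR} to pick a linear isomorphism $S_\alpha\colon E\to X$ satisfying $\|S_\alpha\|,\|S_\alpha^{-1}\|<1+\varepsilon$, $S_\alpha|_{E\cap X}=\mathrm{id}_{E\cap X}$ and $f(S_\alpha(e))=e(f)$ for all $e\in E$, $f\in F$, and then define
\[
K(x^{**}) := \bigl(T_\alpha(x^{**})\bigr)_\mathcal{U}\in X_\mathcal{U},
\]
with $T_\alpha(x^{**})=S_\alpha(x^{**})$ when $x^{**}\in E$ and $T_\alpha(x^{**})=0$ otherwise. The inequality $\|T_\alpha(x^{**})\|\le(1+\varepsilon)\|x^{**}\|$, valid on the $\mathcal{U}$-large set where $x^{**}\in E$, shows that $(T_\alpha(x^{**}))_\alpha$ lies in $\ell^\infty(X,I)$, so $K$ is well defined. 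Verifying the stated properties then reduces to four applications of the same principle that each ``eventually along $\mathcal{U}$'' the situation falls inside a single PLR-approximation: linearity of $K$ follows from the linearity of $S_\alpha$ on $E$ as soon as $E\supseteq\mathrm{span}\{x^{**},y^{**}\}$; the isometry $\|K(x^{**})\|_\mathcal{U}=\|x^{**}\|$ from the two-sided PLR bound with $\varepsilon\to 0$ along $\mathcal{U}$; the identity $\sigma\circ K=\mathrm{id}_{X^{**}}$ by testing against $f\in X^*$ once $f\in F$ and $x^{**}\in E$, where $f(S_\alpha(x^{**}))=x^{**}(f)$ by PLR; and the assertion that $K\circ\kappa_X$ is the canonical embedding from $S_\alpha|_{E\cap X}=\mathrm{id}$. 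The projection statement is then automatic, since $(K\circ\sigma)^2=K\circ(\sigma\circ K)\circ\sigma=K\circ\sigma$, while $\|K\circ\sigma\|\le\|K\|\|\sigma\|\le1$ with equality because $K\circ\sigma$ restricts to the identity on $K(X^{**})$.

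The delicate point, and the one requiring the most care, is to guarantee that the ad hoc choice $T_\alpha(x^{**})=0$ outside of $E$ does not contaminate either linearity or isometry in the ultrafilter limit. This is resolved precisely because $\mathcal{U}$ is engineered so that any fixed finite set of inputs is eventually contained in $E$; the zero-extension is therefore invisible to $\lim_\mathcal{U}$, and each of the four properties above effectively becomes a statement about the ``generic'' PLR map $S_\alpha$.
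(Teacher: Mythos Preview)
Your argument is correct and is essentially the standard proof of this result. Note, however, that the paper does not give its own proof of this proposition: it is quoted verbatim from Heinrich \cite[Proposition~6.7]{He} as a known result needed later, so there is no ``paper's proof'' to compare against. What you have written is precisely Heinrich's construction---indexing over triples $(E,F,\varepsilon)$, refining the order filter to an ultrafilter, and packaging the PLR maps $S_\alpha$ into the embedding $K$. The handling of the zero-extension outside $E$ is correct for exactly the reason you identify: every finite configuration of inputs is eventually absorbed into $E$ along $\mathcal{U}$, so the zero values lie in a $\mathcal{U}$-null set and do not affect any of the limits.

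One small remark for completeness: to confirm that $(T_\alpha(x^{**}))_\alpha\in\ell^\infty(X,I)$ you need a uniform bound over \emph{all} $\alpha$, not just the $\mathcal{U}$-large set where $x^{**}\in E$. This is immediate since $\varepsilon<1$ forces $\|T_\alpha(x^{**})\|\le 2\|x^{**}\|$ whenever $x^{**}\in E$, and $T_\alpha(x^{**})=0$ otherwise; but it is worth stating explicitly.
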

As it was described in \cite{FE}, it is worth to mention that the ultrafilter $\mathcal U$, used in the above proposition, is countably incomplete.\medskip

We are now ready to prove the main result of this section, which can be viewed as an infinite-dimensional version of Theorem \ref{Dean Identity} (see Corollary \ref{SD}). Hereafter $\widehat{\otimes}$ stands for the projective tensor product, and $\cong$ shows the isometric isomorphic between Banach spaces. 
\
\begin{theorem}\label{ultrafilter}
Let $E$ and $X$ be Banach spaces. Then there exists an ultrafilter $\mathcal{U}$ such that 
 \[\F(E,X)^*\cong (E\widehat{\otimes} X^*)_\mathcal{U}/W_0,\]
  where $W_0=\left\{ (u_\alpha)_\mathcal{U}\in (E\widehat{\otimes} X^*)_\mathcal{U} : \lim_\mathcal{U}u_i(T)=0,T\in \mathcal{F}(E,X)\right\}$.
\end{theorem}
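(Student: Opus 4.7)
The plan is to construct a natural contractive map from the ultrapower $(E\widehat{\otimes}X^*)_{\mathcal{U}}$ into $\F(E,X)^*$, identify its kernel with $W_0$, and then use Proposition \ref{add} together with Hahn--Banach to produce preimages of exactly the correct norm, so that the induced quotient map is an isometric isomorphism. The essential background identification is the classical projective-tensor-product duality
\[(E\widehat{\otimes}X^*)^* \cong \B(E,X^{**}),\]
realised by the pairing $(T,e\otimes f)\mapsto f(T(e))$; via this duality we have the canonical inclusion $\F(E,X)\subseteq\B(E,X)\hookrightarrow \B(E,X^{**})=(E\widehat{\otimes}X^*)^*$, which is what gives meaning to the expression $u_\alpha(T)$ appearing in the definition of $W_0$.

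To start the construction, I set $Y:=E\widehat{\otimes}X^*$ and invoke Proposition \ref{add} for $Y$ to obtain an ultrafilter $\mathcal{U}$ together with an isometric embedding $K:Y^{**}\to Y_{\mathcal{U}}$ satisfying $\sigma\circ K=\mathrm{id}_{Y^{**}}$. Then I define $\Psi:Y_{\mathcal{U}}\to \F(E,X)^*$ by
\[\Psi\bigl((u_\alpha)_{\mathcal{U}}\bigr)(T):=\lim_{\mathcal{U}}u_\alpha(T), \qquad T\in\F(E,X).\]
The inequality $|u_\alpha(T)|\le\|u_\alpha\|\,\|T\|$ makes the bounded ultrafilter limit well defined and shows $\|\Psi\|\le 1$; by its very definition $\ker\Psi=W_0$, so $\Psi$ descends to an injective contraction $\tilde\Psi:Y_{\mathcal{U}}/W_0\to \F(E,X)^*$.

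The core of the proof is to verify that $\tilde\Psi$ is surjective and norm-preserving, which I would handle by a Hahn--Banach argument. Given $\varphi\in\F(E,X)^*$, extend it to $\Phi\in Y^{**}$ with $\|\Phi\|=\|\varphi\|$ through the inclusion $\F(E,X)\hookrightarrow \B(E,X^{**})=Y^*$. Setting $(u_\alpha)_{\mathcal{U}}:=K(\Phi)$, the isometric property of $K$ yields $\|(u_\alpha)_{\mathcal{U}}\|_{\mathcal{U}}=\|\Phi\|=\|\varphi\|$. Then for every $T\in\F(E,X)\subseteq Y^*$ one computes, using the formula for $\sigma$ and the identity $\sigma\circ K=\mathrm{id}$,
\[\Psi\bigl((u_\alpha)_{\mathcal{U}}\bigr)(T)=\lim_{\mathcal{U}}u_\alpha(T)=\sigma\bigl((u_\alpha)_{\mathcal{U}}\bigr)(T)=\Phi(T)=\varphi(T).\]
Hence $\tilde\Psi$ surjects onto $\F(E,X)^*$ and admits a contractive right inverse, which together with $\|\tilde\Psi\|\le 1$ forces $\tilde\Psi$ to be an isometric isomorphism.

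I expect the only substantive obstacle to be bookkeeping: keeping the projective duality $(E\widehat{\otimes}X^*)^*\cong\B(E,X^{**})$, the embedding $\F(E,X)\hookrightarrow\B(E,X^{**})$, and the Heinrich lift $K:Y^{**}\to Y_{\mathcal{U}}$ mutually consistent, so that the chain of identifications in the displayed computation truly commutes. The Principle of Local Reflexivity is not used overtly but is hidden inside Proposition \ref{add}; it is precisely the PLR that guarantees the existence of an \emph{isometric} (rather than merely contractive) lift $K$, on which the norm-equality part of the argument decisively depends.
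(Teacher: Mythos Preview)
Your argument is correct and in fact yields the isometric isomorphism cleanly: the embedding $\F(E,X)\hookrightarrow(E\widehat{\otimes}X^*)^*$ is isometric, so the Hahn--Banach extension has the right norm, and the identity $\sigma\circ K=\mathrm{id}_{Y^{**}}$ turns the preimage computation into a one-line check. The paper, however, takes a substantially different route. Rather than invoking Proposition~\ref{add} for $Y=E\widehat{\otimes}X^*$, it builds the ultrafilter by hand: a partial order is defined on $I=\{T\in\F(E,X):\|T\|=1\}$ in terms of complements of kernels, $\mathcal{U}$ is taken to refine the associated order filter, and surjectivity of $\Psi$ is proved by an explicit decomposition $T=\sum_i T_{e_i^T,T(e_i^T)}$ which produces, for each $\phi\in\F(E,X)^*$, a concrete element $\bigl(\sum_i e_i^T\otimes f_{T,i}\bigr)_{\mathcal{U}}$ mapping to $\phi$; the isomorphism is then concluded via the open mapping theorem. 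Your approach is shorter and gives the isometry directly, but the paper's explicit $\mathcal{U}$ carries extra structure exploited later: Remark~\ref{RS} shows that this particular ultrafilter is principal when $\dim E<\infty$ (used in Corollary~\ref{SD} to recover the Dean identity) and is countably incomplete when $E$ is infinite-dimensional with $\B(E,X)=\F(E,X)$ (used in Theorem~\ref{CSD}(ii)). The Heinrich ultrafilter you obtain is always countably incomplete, so while it proves Theorem~\ref{ultrafilter} as stated, it would not feed into those downstream arguments without modification.
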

\begin{proof} First note that if $T\in \mathcal F(E,X)$ with $\dim T(E)=n,$ then there exists a subset $\lbrace e_1,\ldots , e_n\rbrace$ of $E$ such that $\ \langle e_1,\ldots ,e_n\rangle \oplus Ker(T) =E$ (here $\langle e_1,\ldots ,e_n\rangle$ stands for the linear span of $\lbrace e_1,\ldots , e_n\rbrace$). Based on this fact, we define an order $\leq$ on $\mathcal F(E,X)$ as follows:

For $S,T\in \mathcal F(E,X),$ we say $T\leq S$ if there exists a subset $ \lbrace e_1,\ldots, e_n,\ldots ,e_m\rbrace$ of $E$ such that $\langle e_1,\ldots ,e_n\rangle \oplus Ker(T) =E$ and
$\langle e_1,\ldots ,e_n,\ldots ,e_m\rangle \oplus Ker(S)=E$, with $T(e_i)=0\ {\rm for} \ i>n.$

Then obviously we have $T\leq T$. Suppose that $T\leq S$ and $S\leq U$, then there exist $\lbrace e_1,\ldots, e_n,\ldots ,e_m\rbrace\subseteq E$ and $ \lbrace p_1,\ldots, p_m,\ldots ,p_l\rbrace\subseteq E$ such that
\begin{align*}
&\langle e_1,\ldots ,e_n\rangle \oplus Ker(T) =E,
\langle e_1,\ldots ,e_n,\ldots ,e_m\rangle \oplus Ker(S)=E\ {\rm with} \ T(e_i)=0\ {\rm for} \ i>n \ {\rm and} \\
&\langle p_1,\ldots ,p_m\rangle \oplus Ker(S) =E,
\langle p_1,\ldots ,p_m,\ldots ,p_l\rangle \oplus Ker(U)=E\ {\rm with} \ S(p_i)=0\ {\rm for} \ i>n.
\end{align*}
We may, without loss of generality, assume that $\langle p_1,\ldots ,p_n\rangle =\langle e_1,\ldots ,e_n\rangle$. Then we set $p_{n+1}:=\sum_{i=1}^n \alpha_i e_i + q_{n+1}$ and $p_{n+2}:=\sum_{i=1}^n \beta_i e_i + \beta_{n+1}q_{n+1} +q_{n+2}$, where $q_{n+1}, q_{n+2}\in Ker (T)$ with $\|q_{n+2}\|=1$, $q_{n+2}\notin\langle q_{n+1}\rangle$, and $\beta_{n+1}$, $\alpha_i, \beta_i$ are scalars for each $i=1,\ldots,n.$
Continuing this procedure, we arrive at the linearly independent set $\lbrace e_1, \ldots ,e_n,q_{n+1},\ldots ,q_l\rbrace\subseteq E$ satisfying
\[\langle e_1, \ldots ,e_n,q_{n+1},\ldots ,q_l\rangle\oplus Ker(U)=E\ \mbox{with}\ T(q_i)=0\ \mbox{for}\ i>n,\] and this simply implies that $T\leq U.$

We also need to show that, for every $S,T\in \mathcal \mathcal \mathcal \mathcal \mathcal F(E,X)$, there exists $U\in\mathcal F(E,X)$ such that $S\leq U$ and $T\leq U$. To do this, let $\lbrace e_1,\ldots , e_n\rbrace\subseteq E$ and $ \lbrace p_1,\ldots , p_m\rbrace\subseteq E$ be such that $\langle e_1,\ldots ,e_n\rangle \oplus Ker(T) =E$ and $\langle p_1,\ldots ,p_m\rangle \oplus Ker(S) =E $, and set $\langle e_1,\ldots ,e_n\rangle \cap \langle p_1,\ldots ,p_m\rangle =\langle r_1,\ldots ,r_k\rangle$. Similar to the above procedure, for each $1\leq u\leq n$ and $1\leq v\leq m,$ we can replace $\lbrace e_1,\ldots, e_u,r_1,\ldots ,r_k,p_1,\ldots,p_v\rbrace$ by the set $\lbrace q_1,\ldots, q_{\ell},r_1,\ldots ,r_k,s_1,\ldots,s_t\rbrace$, where $q_i\in Ker(S), 1\leq i\leq\ell,$ and $s_j\in Ker(T), 1\leq j\leq t$. Now there exist a subset $\{ x_1,\ldots , x_{\ell+k+t}\}$ of $X$ and an isomorphism $U$ from $\langle q_1,\ldots, q_\ell,r_1,\ldots ,r_k,s_1,\ldots,s_t\rangle$ to $\langle x_1,\ldots , x_{\ell+k+t}\rangle$ that can be naturally extended to $E,$ and this confirms that $T\leq U$ and $S\leq U$.\medskip

Now let $\mathcal U$ be the ultrafilter inducing by refining the order filter on the set $I=\lbrace T\in\mathcal F(E,X) : \|T\|=1\rbrace$ and set $I_T:=\{S: S\geq T\}$. Then we define $\Psi : (E\widehat{\otimes} X^*)_\mathcal{U}\to \mathcal F(E,X)^*$ by the rule
\begin{equation}\label{psi}
\Psi \Big(\big(\sum_{i=1}^\infty a_{\alpha ,i}\otimes x^*_{\alpha ,i}\big)_\mathcal{U}\Big)(T)=\lim_{\mathcal U}\sum_{i=1}^\infty x_{\alpha ,i}^*(T(a_{\alpha ,i})),
\end{equation}
where $\left(\sum_{i=1}^\infty a_{\alpha ,i}\otimes x^*_{\alpha ,i}\right)_\mathcal{U}\in (E\widehat{\otimes} X^*)_\mathcal{U}$ and $T\in\mathcal F(E,X).$
We first show that $\Psi$ is surjective. Let $T\in I$ and let $\lbrace e^T_1,\ldots , e^T_{n_T}\rbrace\subseteq E$ be such that $\langle e^T_1,\ldots , e^T_{n_T}\rangle\oplus Ker(T)=E$. Then for each $x\in X$ and $1\leq i\leq n_T$, there exists $T_{e^T_i,x}\in \mathcal F(E,X)$ such that $T_{e^T_i,x}(e^T_i)=x$ and $T_{e^T_i,x}(y)=0$ when $y\notin\langle e^T_i\rangle$. It follows that $T=\sum_{i=1}^{n_T}T_{e^T_i,T(e^T_i)}.$ We associate a functional $\phi_T\in \mathcal F(E,X)^*$ to each $\phi\in \mathcal F(E,X)^*$ by the role $\phi_T(U) =\sum_{i=1}^{n_T}\phi(T_{e^T_i,U(e^T_i)})$ for each $U\in \mathcal F(E,X)$. It is easy to check that $\phi_T$ is independent of the choice of $e^T_1,\ldots , e^T_{n_T}$.

Then we claim that $\phi =(\phi_T)_\mathcal{U}$. Indeed, if $S\geq T$, then there exists $\{e^S_1,\ldots ,e^S_{n_T},\ldots ,e^S_{m_S}\}\subseteq E$ such that $\langle e^S_1,\ldots ,e^S_{n_T},\ldots ,e^S_{m_S}\rangle \oplus Ker(S)=E,$ with $e^S_i=e^T_i$ for $i=1,\ldots ,n_T$ and $T(e^S_i)=0,$ when $i>n_T.$
From this, we get
\[
\phi_S(T)= \sum_{i=1}^{m_S} \phi \big(S_{e^S_i,T(e^S_i)}\big)=\sum_{i=1}^{n_T}\phi\big(T_{e^T_i,T(e^T_i)}\big)=\phi_T(T)
\]
and
\[
\phi (T)=\phi \Big(\sum_{i=1}^{n_T}T_{e^T_i,T(e^T_i)}\Big)=\phi_T(T).
\]
The latter identity follows that $\phi=(\phi_T)_\mathcal{U}.$ We also have
\begin{align}\label{onto}
\phi (T) =\phi _T(T) = \sum_{i=1}^{n_T}\phi \big(T_{e^T_i,T(e^T_i)}\big)=\sum_{i=1}^{n_T} f_{T,i}\big(T(e^T_i)\big) =\Psi\Big(\big(\sum_{i=1}^{n_T} e^T_i\otimes f_{T,i}\big)_{\mathcal U}\Big)(T),
\end{align}
where $f_{T,i}\in X^*$ is defined by $f_{T,i}(x)= \phi (T_{e^T_i,x}) (x\in X).$ From \eqref{onto}, we get that $\Psi$ is onto.

If we set $W_0=\ker \Psi,$
then the quotient mapping $\Psi_0: (E\widehat{\otimes} X^*)_\mathcal{U}/W_0\to  F(E,X)^*$ which maps $(u_\alpha)_\mathcal{U}+W_0\in (E\widehat{\otimes} X^*)_\mathcal{U}/W_0 $ to $\Psi((u_\alpha)_\mathcal{U})$ is  bijective. Further,
  for every $u=(u_\alpha)_\mathcal{U}$ in $(E\widehat{\otimes} X^*)_\mathcal{U}$ and $(w_\alpha)_\mathcal{U} \in W_0$, we have 
%
\begin{align*}
\|\Psi_0(u+W_0)\| &=\sup_{T\in {\rm Ball}(\mathcal F(E,X))} |\Psi ((u_\alpha)) (T)| \\
&=\sup_{T\in {\rm Ball}(\mathcal F(E,X))} | \lim_\mathcal{U}u_\alpha (T)| \\
&= \sup_{T\in {\rm Ball}(\mathcal F(E,X))} | \lim_\mathcal{U}u_\alpha + w_\alpha(T)|\\
&\leq \lim_\mathcal{U}\sum_{i=1}^\infty \| u_\alpha + w_\alpha\| \\
&\leq \| u+w \|_\mathcal{U}.
\end{align*}
Since $w\in W_0$ is arbitrary, we have $\|\Psi_0(u+W_0)\|\leq \|u+W_0\|$. Open mapping theorem now implies that $\Psi_0$ is an isomorphism and this completes the proof.
\end{proof}
Since  $\mathcal{A}(E,X)$, the space of all approximable operators from $E$ to $X$,  is the norm closure of $\mathcal{F}(E,X)$, the following corollary comes immediately from Theorem \ref{ultrafilter}.
\begin{corollary}\label{aultrafilter}
Let $E$ and $X$ be two Banach spaces. Then there exists an ultrafilter $\mathcal{U}$ such that \[(E\widehat{\otimes} X^*)_\mathcal{U}/W_0\cong\mathcal{A}(E,X)^*,\] where  $W_0=\left\{ (u_\alpha)_\mathcal{U}\in (E\widehat{\otimes} X^*)_\mathcal{U} : \lim_\mathcal{U}u_i(T)=0,T\in\mathcal{A}(E,X)\right\}.$
\end{corollary}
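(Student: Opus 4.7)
The plan is to derive the corollary directly from Theorem \ref{ultrafilter} by exploiting the norm-density of $\F(E,X)$ in $\A(E,X)$. Since $\A(E,X)$ is by definition the norm-closure of $\F(E,X)$, every bounded functional on $\F(E,X)$ extends uniquely to a bounded functional on $\A(E,X)$ of the same norm, and restriction provides the inverse; thus restriction furnishes an isometric isomorphism $\A(E,X)^* \cong \F(E,X)^*$. Composing this with the isomorphism $\Psi_0$ from Theorem \ref{ultrafilter} (for the same ultrafilter $\mathcal{U}$), I obtain an isometric isomorphism $(E\widehat{\otimes} X^*)_\mathcal{U}/W_0^{\F} \cong \A(E,X)^*$, where $W_0^{\F}$ denotes the kernel appearing in Theorem \ref{ultrafilter}.

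It then remains to verify that $W_0^{\F}$ coincides with the set $W_0$ defined in the statement. The inclusion $W_0 \subseteq W_0^{\F}$ is immediate from $\F(E,X) \subseteq \A(E,X)$. For the reverse, suppose $(u_\alpha)_\mathcal{U} \in W_0^{\F}$ and fix $T \in \A(E,X)$ together with a sequence $T_n \in \F(E,X)$ with $\|T - T_n\| \to 0$. The standard pairing estimate $|u_\alpha(T - T_n)| \leq \|u_\alpha\|_{E\widehat{\otimes} X^*}\,\|T - T_n\|$, together with the $\mathcal{U}$-essential boundedness of $(\|u_\alpha\|)$, yields $|\lim_\mathcal{U} u_\alpha(T)| \leq M\|T - T_n\|$ for every $n$, where $M = \|(u_\alpha)_\mathcal{U}\|_\mathcal{U}$. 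Letting $n \to \infty$ gives $\lim_\mathcal{U} u_\alpha(T) = 0$, so $(u_\alpha)_\mathcal{U} \in W_0$, as required.

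I do not anticipate any genuine obstacle: the whole argument is a routine density-and-extension step riding on Theorem \ref{ultrafilter}. The only delicate point worth flagging is the compatibility between the ultrafilter limit and the norm approximation of $T$ by finite-rank operators, but this is controlled precisely by the uniform bound $\|u_\alpha\| \leq M$ afforded by the ultrapower norm.
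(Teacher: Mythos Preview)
Your proposal is correct and follows exactly the approach the paper intends: the paper gives no detailed argument, simply noting that the corollary ``comes immediately from Theorem~\ref{ultrafilter}'' since $\mathcal{A}(E,X)$ is the norm closure of $\mathcal{F}(E,X)$. You have faithfully expanded this one-line justification, including the verification that the two candidate kernels $W_0^{\mathcal{F}}$ and $W_0$ coincide, which is precisely the density argument the paper leaves implicit.
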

\begin{remark}\label{RS}
We describe some properties of the ultrafilter $\mathcal U$ satisfying  Theorem \ref{ultrafilter} (and Corollary \ref{aultrafilter}). Recall that $\mathcal U$ is the ultrafilter inducing by refining the order filter on the set $I=\lbrace T\in\mathcal F(E,X) : \|T\|=1\rbrace$ and that $I_T:=\{S: S\geq T\}.$
\begin{enumerate}[\hspace{1em}\rm (a)]
\item In the case where $\dim (E)<\infty,$ we claim that the involved ultrafilter $\mathcal U$ must be principal. To show this, we first note that if $X$ is of finite dimension, then $E\widehat{\otimes} X^*$ is also of finite dimension and so $\mathcal U$ is principal. For the case that $X$ has infinite dimension, we can simply find an injective element $S_0\in\F(E,X)$ such that $T\leq S_0$ (i.e., $S_0\in I_T$) for all $T\in\F(E,X)$ with $\|T\|=1$, consequently $\mathcal U$ is again principal. The principality of $\mathcal U$ provides some simplification in the identity of Theorem \ref{ultrafilter} that will be applied in the proof of Corollary \ref{SD}.

\item We consider the case when $\mathcal B(E,X)=\mathcal F(E,X)$ and $E$ is infinite-dimensional. In this special case, the involved $\mathcal{U}$ is free and countably incomplete. To prove this claim, let $\lbrace e_1,e_2,e_3,\ldots \rbrace$ be a basis for $E$; then there exists an increasing sequence $(T_n)$ of finite rank operators with $\|T_i\|=1$ such that $\langle e_1,\ldots, e_i\rangle\oplus Ker (T_i)=E$. 
If $S\in \bigcap_{i=1}^\infty I_{T_i}$, then there exists a finite-dimensional subspace $F$ with $\dim F=k$ such that $F\oplus Ker (S)=E$. But the fact that $S\in I_{T_{k+1}}$ follows that $\dim F>k,$ which is a contradiction. We thus prove that $ \bigcap_{i=1}^\infty {I}_{T_i}=\emptyset,$ and so $\mathcal{U}$ is free and countably incomplete. \medskip

\end{enumerate}
\end{remark}
Applying Theorem \ref{ultrafilter} for the case where $E$ is finite-dimensional, we come to the following corollary, whose first part  is somewhat stronger than the classical Dean identity (Theorem \ref{Dean Identity}).
\begin{corollary}\label{SD}
Let $E$ be a finite-dimensional normed space and let $X$ be a Banach space. Then \[ E\widehat{\otimes}X^*\cong\mathcal B(E,X)^*.\]
In particular, it follows  Dean identity $\mathcal B(E,X^{**})\cong\mathcal B(E,X)^{**}.$
\end{corollary}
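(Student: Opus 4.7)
The plan is to specialize Theorem \ref{ultrafilter} to the case $\dim(E)<\infty$ and then collapse both the ultrapower and the quotient back to $E\widehat{\otimes}X^{*}$ itself. Since every bounded operator on a finite-dimensional domain has finite-dimensional range, $\B(E,X)=\F(E,X)$, so Theorem \ref{ultrafilter} already yields an isomorphism $\B(E,X)^{*}\cong(E\widehat{\otimes}X^{*})_{\mathcal{U}}/W_{0}$ for a suitable ultrafilter $\mathcal{U}$.

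Next I would invoke Remark \ref{RS}(a), which asserts that in the finite-dimensional case the directing ultrafilter $\mathcal{U}$ used in the proof of Theorem \ref{ultrafilter} can be taken principal: either $X$ is finite-dimensional (so $E\widehat{\otimes}X^{*}$ itself is finite-dimensional), or one can find a single injective $S_{0}\in\F(E,X)$ dominating every element of the indexing set $I$. A principal ultrafilter collapses the ultrapower to the base space via the canonical isometric embedding, so $(E\widehat{\otimes}X^{*})_{\mathcal{U}}\cong E\widehat{\otimes}X^{*}$ as Banach spaces.

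It then remains to verify that, under this identification, the subspace $W_{0}$ is trivial. Fixing a basis $\{e_{1},\ldots,e_{n}\}$ of $E$, every $u\in E\widehat{\otimes}X^{*}$ admits a unique representation $u=\sum_{i=1}^{n}e_{i}\otimes x_{i}^{*}$ with $x_{i}^{*}\in X^{*}$. If the pairing $\Psi(u)(T)=\sum_{i=1}^{n}x_{i}^{*}(T(e_{i}))$ of \eqref{psi} vanishes for every $T\in\B(E,X)$, then choosing $T$ to send $e_{j}$ to an arbitrary $x\in X$ and to annihilate the remaining basis vectors forces $x_{j}^{*}(x)=0$; hence each $x_{j}^{*}=0$ and $u=0$. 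Combining the three steps yields the claimed isometric identification $E\widehat{\otimes}X^{*}\cong\B(E,X)^{*}$.

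For the ``in particular'' part, I would dualize the established identity and use the standard trace-duality $(E\widehat{\otimes}Y)^{*}\cong\B(E,Y^{*})$ with $Y=X^{*}$, together with $E^{**}=E$, to conclude $\B(E,X)^{**}\cong(E\widehat{\otimes}X^{*})^{*}\cong\B(E,X^{**})$. The main subtlety I anticipate lies in step two: one has to check that the principal-ultrafilter isomorphism $(E\widehat{\otimes}X^{*})_{\mathcal{U}}\cong E\widehat{\otimes}X^{*}$ actually intertwines the map $\Psi$ defined in \eqref{psi} with the natural bilinear pairing $u\mapsto(T\mapsto\langle u,T\rangle)$, so that the triviality computation above genuinely identifies $\ker\Psi=W_{0}$ with $\{0\}$ inside $E\widehat{\otimes}X^{*}$.
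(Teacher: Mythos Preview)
Your overall plan matches the paper's: invoke Theorem \ref{ultrafilter}, use Remark \ref{RS}(a) to make $\mathcal{U}$ principal so that $(E\widehat{\otimes}X^*)_{\mathcal{U}}$ collapses to $E\widehat{\otimes}X^*$, show $W_0=\{0\}$, and then dualize via $(E\widehat{\otimes}X^*)^*\cong\mathcal{B}(E,X^{**})$. The one substantive difference is in the verification that $W_0=\{0\}$. You argue directly: write $u=\sum_{i=1}^n e_i\otimes x_i^*$ in a fixed basis of $E$ and test against the coordinate operators $T_j$ defined by $T_j(e_i)=\delta_{ij}x$ to force each $x_j^*=0$. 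This is correct and entirely elementary. The paper instead invokes the PLR: given $u=\sum a_i\otimes x_i^*\in W_0$, it picks $T\in\mathcal{B}(E,X^{**})$ nearly realizing $\|u\|$, then uses Theorem \ref{PLR} to produce $S:T(E)\to X$ with $x_i^*(S(T(a_i)))=T(a_i)(x_i^*)$, so that $u(T)=u(S\circ T)=0$ since $S\circ T\in\mathcal{F}(E,X)$. Your route is shorter and avoids any deep tool; the paper's route is deliberately chosen to exhibit Corollary \ref{SD} as a consequence of the PLR (reversing Dean's original derivation of PLR from the identity), which is the thematic point of the article. The subtlety you flag about $\Psi$ intertwining with the natural pairing is harmless: for a principal ultrafilter generated by a single index $\alpha_0$, the ultralimit $\lim_{\mathcal{U}}u_\alpha(T)$ is just $u_{\alpha_0}(T)$.
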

\begin{proof} Since $\dim(E)<\infty$, as we have discussed in Remark \ref{RS}(a), the ultrafilter $\mathcal U$ appeared in Theorem \ref{ultrafilter} should be principal. This follows that  $W_0$  (as appeared in Theorem \ref{ultrafilter}) lies in $E\widehat{\otimes}X^*$ and  we shall see that it must be trivial. To do this, 
Let $u=\sum_{i=1}^n a_i\widehat{\otimes}x_i^*\in W_0$ and $T\in (E\widehat{\otimes}X^*)^*\cong\mathcal{B}(E,X^{**})=\mathcal{F}(E,X^{**})$ be such that $\|u\|\leq |u(T)|+\varepsilon$. By the PLR, Theorem \ref{PLR},  there exists a bounded linear operator  $S:T(E)\to X$ such that $\|S\|,\|S^{-1}\|<1+\varepsilon$, $S|_{T(E)\cap X}= id_{T(E)\cap X}$ and $f(S(T(e)))=f(T(e))$ for all $e\in E$ and $f\in \{x_1^*,\cdots ,x_n^*\}\subseteq X^*$. Since $S\circ T\in \mathcal{F}(E,X)$ and $u\in W_0$, we have
\[
u(T)=\sum_{i=1}^n x_i^*(T(a_i))= \sum_{i=1}^n x_i^*(S(T(a_i)))=u(S\circ T)=0,
\]
 and this confirms  that  $W_0$ is trivial. Consequently, we arrive at the identity $ E\widehat{\otimes}X^*\cong\mathcal B(E,X)^*$. It particularly follows that 
$\mathcal B(E,X^{**})\cong(E\widehat{\otimes} X^*)^*\cong\mathcal B(E,X)^{**},$
as claimed.
\end{proof}\medskip
\section{Reflexivity of $\mathcal{B}(E,X)$}
Here we serve  Theorem \ref{ultrafilter} for investigating  on the reflexivity of  $\mathcal{B}(E,X)$, a problem whose history  goes back to the works of Grothendieck \cite{Gr1} (see also \cite{H}). It was  Ruckle \cite{R} that used a deep theorem of Grothendieck to show that,   if $E$ and $X$ are reflexive  and both $E$ and $X$  have the  approximation property, then $\mathcal{B}(E,X)$ is reflexive if and only if  $\mathcal{B}(E,X)=\mathcal{K}(E,X),$ the space of compact operators. Later Holub \cite{H}  improved Ruckle's result by showing that the result remains valid when either $E$ or $X$ has the approximation property. However, an inspection of Holub's proof of  \cite[Theorem 2]{H} shows that the assumption that $E$ or $X$  have the approximation property was not  used  in the proof of sufficiency. Independently, Kalton \cite{K} used the uniform boundedness principle to  give a direct proof for the  sufficiency, without the additional approximation property of $E$ and $X$. Indeed, he showed  that:
\begin{theorem}[{Kalton, \cite[Corollary 2]{K}}]\label{Kalton}
If $E$ and $X$ are reflexive  and  $\mathcal B(E,X)=\mathcal K(E,X)$, then $\mathcal{B}(E,X)$  is reflexive.
\end{theorem}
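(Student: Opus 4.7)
The plan is to prove the surjectivity of the canonical embedding $\kappa\colon\mathcal B(E,X)\hookrightarrow \mathcal B(E,X)^{**}$ by associating, to each $\Phi\in\mathcal B(E,X)^{**}$, a compact operator $T\in\mathcal B(E,X)$ with $\kappa(T)=\Phi$.

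First I would construct the candidate $T$. For each $(e,x^*)\in E\times X^*$, the point-evaluation $\omega_{e,x^*}\colon S\mapsto x^*(S(e))$ is a functional on $\mathcal B(E,X)$ of norm $\|e\|\,\|x^*\|$, hence $(e,x^*)\mapsto \Phi(\omega_{e,x^*})$ is a bilinear form of norm at most $\|\Phi\|$. Reflexivity of $X$ turns it into a unique operator $T\in\mathcal B(E,X)$ with $x^*(T(e))=\Phi(\omega_{e,x^*})$ and $\|T\|\le\|\Phi\|$; the hypothesis $\mathcal B(E,X)=\mathcal K(E,X)$ then forces $T$ to be compact. An alternative, in the categorical spirit of Section~2, uses Corollary~\ref{SD}: for each finite-dimensional $F\subseteq E$ one has $\mathcal B(F,X)^{**}\cong \mathcal B(F,X^{**})\cong \mathcal B(F,X)$ by reflexivity of $X$, so $\mathcal B(F,X)$ is reflexive; dualising the restriction maps $R_F\colon \mathcal B(E,X)\to \mathcal B(F,X)$ yields a coherent family $T_F:=R_F^{**}(\Phi)\in \mathcal B(F,X)$ that, by functoriality, glues into the same $T\colon E\to X$.

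Next I would compare $\Phi$ and $\kappa(T)$. By construction $\kappa(T)(\omega_{e,x^*})=x^*(T(e))=\Phi(\omega_{e,x^*})$, so $\Phi-\kappa(T)\in \mathcal B(E,X)^{**}$ annihilates the linear span $V\subseteq \mathcal B(E,X)^*$ of the functionals $\omega_{e,x^*}$, which is precisely the image of the canonical contraction $E\widehat{\otimes} X^*\to \mathcal B(E,X)^*$.

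The delicate point, and the main obstacle, is that $V$ is typically only weak$^*$-dense in $\mathcal K(E,X)^*$ (it is point-separating because $\omega_{e,x^*}(S)=0$ for all $e,x^*$ forces $S=0$) but not norm-dense, so annihilation on $V$ does not by itself yield $\Phi=\kappa(T)$. To close this gap I would invoke Kalton's uniform boundedness argument, which exploits the two hypotheses jointly: compactness of every $S\in \mathcal B(E,X)$ together with reflexivity of both $E$ and $X$ renders the unit ball of $\mathcal K(E,X)$ compact in the coarse $V$-topology, and a Krein--Smulian style argument then promotes the weak$^*$-density of $V$ to the statement that any norm-continuous functional on $\mathcal K(E,X)^*$ vanishing on $V$ must be identically zero. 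Applied to $\Phi-\kappa(T)$, this forces $\kappa(T)=\Phi$ and hence the reflexivity of $\mathcal B(E,X)$. The cooperation of the two hypotheses is essential in this final step: dropping either reflexivity or the compactness identification would leave $V$ incapable of controlling all of $\mathcal K(E,X)^*$.
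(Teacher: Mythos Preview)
The paper does not prove this statement: it is quoted verbatim from Kalton \cite[Corollary~2]{K}, with the remark that Kalton ``used the uniform boundedness principle to give a direct proof.'' There is therefore no in-paper argument to compare against; the question is whether your sketch stands on its own.

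Your first two steps are correct and standard: from $\Phi\in\mathcal B(E,X)^{**}$ one builds $T\in\mathcal B(E,X)$ via the bilinear form $(e,x^*)\mapsto\Phi(\omega_{e,x^*})$ (reflexivity of $X$ is what makes $T$ land in $\mathcal B(E,X)$ rather than $\mathcal B(E,X^{**})$), and then $\Phi-\kappa(T)$ annihilates the span $V$ of the $\omega_{e,x^*}$. The entire difficulty is your step~3, and here the proposal has a genuine gap. You write that WOT-compactness of the unit ball together with ``Kalton's uniform boundedness argument'' and a ``Krein--Smulian style argument'' forces $\Phi=\kappa(T)$. Two problems:
\begin{itemize}
\item WOT-compactness of the unit ball of $\mathcal B(E,X)$ follows already from reflexivity of $X$ alone (it is the weak$^*$-compactness of the dual ball of $E\widehat{\otimes}X^*$); neither compactness of the operators nor reflexivity of $E$ is used yet, so this observation cannot by itself exploit the hypotheses.
\item Krein--Smulian, applied to the dual pair $(E\widehat{\otimes}X^*,\mathcal B(E,X))$, says that a functional $\phi\in\mathcal B(E,X)^*$ lies in (the image of) $E\widehat{\otimes}X^*$ if and only if $\phi$ is WOT-continuous on bounded sets. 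So what you actually need is that \emph{every} $\phi\in\mathcal K(E,X)^*$ is WOT-continuous on the unit ball. That statement is true under the hypotheses, but proving it is precisely the content of Kalton's theorem; invoking ``Kalton's uniform boundedness argument'' to obtain it is circular.
\end{itemize}
In short, the architecture is right, but the load-bearing step is outsourced to the very result you are asked to prove. A self-contained proof must supply the mechanism by which compactness of every operator and reflexivity of $E$ force WOT and weak convergence to coincide on bounded nets in $\mathcal K(E,X)$; nothing in the sketch does this.
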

More importantly,  Holub \cite{H} also conjectured that the necessity also remains valid without the approximation property of $E$ and $X$.
 In the following we serveTheorem \ref{ultrafilter} to prove his  conjecture affirmatively and improve the necessity.
\begin{theorem}\label{finite}
Let $E$ and $X$ be  Banach spaces.
 If  $\mathcal{B}(E,X)$  is reflexive, then $E$ and $X$ are reflexive and  $\mathcal{B}(E,X)=\mathcal{A}(E,X).$
\end{theorem}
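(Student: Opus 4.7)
The plan is to prove the two conclusions separately. Reflexivity of $E$ and $X$ is the easy part: for any nonzero $x_0 \in X$, the assignment $e^*\mapsto e^*\otimes x_0$ is an isomorphic embedding of $E^*$ onto a closed subspace of $\mathcal{B}(E,X)$, so $E^*$ inherits reflexivity from $\mathcal{B}(E,X)$ and hence so does $E$; symmetrically, for any nonzero $e^*_0 \in E^*$ the rule $x \mapsto e^*_0\otimes x$ embeds $X$ isomorphically as a closed subspace of $\mathcal{B}(E,X)$, giving reflexivity of $X$.

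For the identity $\mathcal{B}(E,X)=\mathcal{A}(E,X)$, I would first exploit the reflexivity of $X$ just obtained to identify $\mathcal{B}(E,X)=\mathcal{B}(E,X^{**}) \cong (E\widehat{\otimes}X^*)^*$ via standard trace duality. Reflexivity of this dual space transfers to the predual, so $E\widehat{\otimes}X^*$ is reflexive, and consequently $\mathcal{B}(E,X)^* \cong (E\widehat{\otimes}X^*)^{**}=E\widehat{\otimes}X^*$. This is the natural infinite-dimensional analogue of the identification in Corollary \ref{SD} and will play the same role here.

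The remainder is a Hahn--Banach plus density argument. Assume for contradiction that some $T_0 \in \mathcal{B}(E,X)\setminus\mathcal{A}(E,X)$ exists. Hahn--Banach produces $\phi \in \mathcal{B}(E,X)^*=E\widehat{\otimes}X^*$ vanishing on the norm-closed subspace $\mathcal{A}(E,X)$ but with $\phi(T_0)\neq 0$; in particular $\phi$ annihilates $\mathcal{F}(E,X)$. The main obstacle is to upgrade this to $\phi=0$, and this is exactly where the PLR enters, in the same spirit as in the proof of Theorem \ref{ultrafilter}. The goal is to establish that $\mathcal{F}(E,X)$ is weak$^*$-dense in $(E\widehat{\otimes}X^*)^*=\mathcal{B}(E,X^{**})$. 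Given $T\in\mathcal{B}(E,X^{**})$, finitely many $u_1,\dots,u_n \in E\widehat{\otimes}X^*$ written as absolutely convergent series $u_j=\sum_k a_k^{(j)}\otimes x_k^{*(j)}$, and $\varepsilon>0$, I would truncate at a large index $N$ so the tails contribute less than $\varepsilon$, and then invoke Theorem \ref{PLR} with the finite-dimensional subspaces $\mathrm{span}\{T(a_k^{(j)}):k\le N,\,j\le n\}\subseteq X^{**}$ and $\mathrm{span}\{x_k^{*(j)}:k\le N,\,j\le n\}\subseteq X^*$. Composing the resulting $(1+\varepsilon)$-isomorphism $\sigma$ into $X$ with $T$ on $\mathrm{span}\{a_k^{(j)}:k\le N,\,j\le n\}$ and extending by finite rank to all of $E$ through coordinate functionals yields $S \in \mathcal{F}(E,X)$ with $u_j(S)$ arbitrarily close to $u_j(T)$ for every $j$. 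Weak$^*$-continuity of $\phi$ (being an element of the predual) then forces $\phi(T_0)=0$, contradicting $\phi(T_0)\neq 0$ and completing the proof.
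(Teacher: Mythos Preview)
Your argument for the reflexivity of $E$ and $X$ is fine and matches the paper's. The second part, however, has a genuine gap.

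The crux of your argument is the claim that $\mathcal{F}(E,X)$ is weak$^*$-dense in $(E\widehat{\otimes}X^*)^*=\mathcal{B}(E,X^{**})$. By the bipolar theorem this is equivalent to the assertion that the only $u\in E\widehat{\otimes}X^*$ annihilating every rank-one operator $e^*\otimes x$ is $u=0$; in other words, that the canonical map $E\widehat{\otimes}X^*\to E\check{\otimes}X^*$ is injective. That is precisely an approximation-property statement (Grothendieck) and can fail when neither $E$ nor $X^*$ has AP. Your PLR construction does not escape this: since $X$ is already reflexive, the PLR map $\sigma$ is just the identity on the chosen finite-dimensional piece of $X$, so your $S$ is essentially $T\circ P$ for a projection $P$ onto $\mathrm{span}\{a_k^{(j)}:k\le N\}$. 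You then need $|u_j(S)-v_j(S)|\le \|u_j-v_j\|\,\|S\|$ to be small, but $\|S\|\le\|T\|\,\|P\|$ and $\|P\|$ grows with $N$ (at best like $\sqrt{\dim}$ via Kadec--Snobar), while the tails $\|u_j-v_j\|$ are only known to tend to zero with no prescribed rate. This race condition cannot be resolved in general. Indeed, your density argument never invokes the reflexivity of $\mathcal{B}(E,X)$, so it would have to hold for any reflexive $X$; taking $E=X$ reflexive without AP already furnishes a nonzero element of $E\widehat{\otimes}E^*$ killed by all finite-rank operators, contradicting the density.

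The paper circumvents this obstruction entirely: instead of trying to identify $\mathcal{A}(E,X)^*$ with $E\widehat{\otimes}X^*$ (which would amount to the very AP statement above), it uses Corollary~\ref{aultrafilter} to realise $\mathcal{A}(E,X)^*$ as the quotient $(E\widehat{\otimes}X^*)_{\mathcal{U}}/W_0$. One then embeds $\mathcal{B}(E,X)$ into $\big((E\widehat{\otimes}X^*)_{\mathcal{U}}/W_0\big)^*\cong\mathcal{A}(E,X)^{**}$ via the tautological pairing and checks by direct computation that the composite lands back on $T$; reflexivity of $\mathcal{A}(E,X)$ (inherited from $\mathcal{B}(E,X)$) then forces $\mathcal{B}(E,X)=\mathcal{A}(E,X)$. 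The ultrapower is what absorbs the missing approximation property.
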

\begin{proof}
 Let $\mathcal{B}(E,X)$ be reflexive, then the reflexivity of $E$ and $X$  follows from the fact that both $E^*$ and $X$ can be  naturally embedded in $\B(E,X)$.
  To prove the identity   $\mathcal{B}(E,X)=\mathcal{A}(E,X)$, we assume   that $E$ is infinite dimensional.  By Corollary \ref{aultrafilter} there is an isometric isomorphism $\Psi_0: (E\widehat{\otimes} X^*)_\mathcal{U}/W_0\to \mathcal{A}(E,X)^*$, for some ultrafilter $\mathcal{U}.$  On the other hand, $\B(E,X)$ can be easily embedded in $((E\widehat{\otimes} X^*)_\mathcal{U}/W_0)^*$ via the mapping  $\theta :\B(E,X)\to ((E\widehat{\otimes} X^*)_\mathcal{U}/W_0)^*$ with the rule $\theta(T)((u_\alpha)_\mathcal{U}+W_0)=\lim_\mathcal{U}u_\alpha (T)$ for each $T\in \B(E,X)$ and $(u_\alpha)_\mathcal{U}+W_0\in (E\widehat{\otimes} X^*)_\mathcal{U}/W_0$. 
Now if we consider the mapping
\[
\mathcal{B}(E,X)\xhookrightarrow{\theta}\left((E\widehat{\otimes} X^*)_\mathcal{U}/W_0\right)^*\xrightarrow{\Psi_0^{*-1}} \mathcal{A}(E,X)^{**}\cong\mathcal{A}(E,X),
\]
then a direct verification reveals that  $\Psi_0^{*-1}\left( \theta(T)\right)=T,$ for every $T\in \mathcal{B}(E,X). $ Indeed, for every $\phi\in \mathcal{A}(E,X)^*$ with $\Psi_0^{-1} (\phi)=\left(\sum_{i=1}^\infty a_{\alpha,i}\otimes x^*_{\alpha,i}\right)_\mathcal{U}+W_0$, we have
\begin{align*}
\langle \Psi_0^{*-1}\left(\theta(T)\right), \phi\rangle 
=\big\langle \theta(T), \big(\sum_{i=1}^\infty a_{\alpha,i}\otimes x^*_{\alpha,i}\big)_\mathcal{U}+W_0\big\rangle
=\lim_\mathcal{U}\sum_{i=1}^\infty x^*_{\alpha,i}(T(a_{\alpha,i}))
=\langle T, \phi\rangle.
\end{align*}
Therefore, $\mathcal{B}(E,X)=\mathcal{A}(E,X)$, as claimed. 
\end{proof}
As an immediate consequence  of Theorem \ref{finite}, we get  the following result characterizing   the reflexivity of $\B(E)$,  (see \cite [Corollary 2]{Ba}).
\begin{corollary}\label{finite1}
Let $E$ be a Banach space. Then $\mathcal{B}(E)$ is reflexive if and only if $E$ is finite-dimensional.
\end{corollary}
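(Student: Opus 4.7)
The plan is to extract Corollary \ref{finite1} as an essentially immediate application of Theorem \ref{finite}, together with the elementary fact that the identity operator on an infinite-dimensional Banach space is never compact.

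For the easy (``if'') direction, I would observe that when $\dim E<\infty$, the space $\mathcal{B}(E)$ is itself finite-dimensional (of dimension $(\dim E)^2$), and every finite-dimensional normed space is automatically reflexive.

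For the main (``only if'') direction, I would apply Theorem \ref{finite} with $X=E$: the reflexivity of $\mathcal{B}(E)$ yields $\mathcal{B}(E)=\mathcal{A}(E)$. Since the identity operator $I_E$ belongs to $\mathcal{B}(E)$, this forces $I_E\in\mathcal{A}(E)$, i.e., $I_E$ is a norm-limit of finite rank operators. As $\mathcal{A}(E)\subseteq\mathcal{K}(E)$ (the norm-limit of compact operators is compact, and finite rank operators are trivially compact), we conclude that $I_E$ is compact. But the compactness of $I_E$ means precisely that the closed unit ball of $E$ is norm-compact, and by the classical Riesz lemma this forces $\dim E<\infty$.

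The proof is essentially a one-line chain of implications, so I do not expect any genuine obstacle; the only delicate point is just to invoke Theorem \ref{finite} to replace $\mathcal{B}(E)$ by $\mathcal{A}(E)$ and then recall that the identity on an infinite-dimensional space cannot lie in the ideal of compact operators. I would not need any additional machinery beyond what has already been developed in the excerpt.
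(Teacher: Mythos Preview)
Your proposal is correct and follows essentially the same approach as the paper: the paper states Corollary \ref{finite1} as an immediate consequence of Theorem \ref{finite} without spelling out the details, and the argument you supply (apply Theorem \ref{finite} with $X=E$ to get $\mathcal{B}(E)=\mathcal{A}(E)$, then observe that $I_E\in\mathcal{A}(E)\subseteq\mathcal{K}(E)$ forces $\dim E<\infty$ via Riesz) is exactly the intended one-line deduction.
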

A classical theorem of Pitt \cite{P} (see also \cite[p.175]{FZ})  asserts that $\mathcal{B}(\ell^p,\ell^q)=\mathcal{K}(\ell^p,\ell^q)$, when  $1<q<p<\infty$.  Combining this together with Theorems \ref{Kalton} and \ref{finite}, we arrive at the following improvement of Pitt's theorem.
\begin{corollary}
$\mathcal{B}(\ell^p,\ell^q)=\mathcal{A}(\ell^p,\ell^q)$, when  $1<q<p<\infty$.
\end{corollary}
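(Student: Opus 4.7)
The plan is to chain together Pitt's theorem with Theorem \ref{Kalton} and Theorem \ref{finite}, using $\mathcal{A} \subseteq \mathcal{K}$ as the bridge. Specifically, Pitt's theorem gives $\mathcal{B}(\ell^p,\ell^q)=\mathcal{K}(\ell^p,\ell^q)$ for $1<q<p<\infty$, and since $\ell^p$ and $\ell^q$ are both reflexive in this range, Theorem \ref{Kalton} applies to yield reflexivity of $\mathcal{B}(\ell^p,\ell^q)$. Once reflexivity is in hand, Theorem \ref{finite} immediately upgrades the conclusion from compact to approximable, giving $\mathcal{B}(\ell^p,\ell^q)=\mathcal{A}(\ell^p,\ell^q)$.

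First I would explicitly record that $\ell^p$ and $\ell^q$ are reflexive whenever $1<p,q<\infty$ (the only nontrivial hypothesis needed for Theorem \ref{Kalton}). Then I would invoke Pitt to substitute $\mathcal{K}$ for $\mathcal{B}$ and apply Theorem \ref{Kalton} to obtain the reflexivity of $\mathcal{B}(\ell^p,\ell^q)$. Finally, plugging this reflexivity into Theorem \ref{finite} with $E=\ell^p$ and $X=\ell^q$ produces $\mathcal{B}(\ell^p,\ell^q)=\mathcal{A}(\ell^p,\ell^q)$.

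There is essentially no obstacle here beyond citing the three inputs in the correct order; the whole point of the corollary is that it is an immediate synthesis of the machinery already developed. The only subtle conceptual remark worth making (but not a real difficulty) is that Pitt alone gives only the \emph{compact} version of the identity, whereas the sharpening to \emph{approximable} operators is exactly what Theorem \ref{finite} contributes over the classical Ruckle--Holub--Kalton line. In this sense the corollary is really a one-line exhibit of how Theorem \ref{finite} refines Pitt's classical conclusion.
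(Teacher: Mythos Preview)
Your proposal is correct and follows exactly the route the paper intends: Pitt's theorem gives $\mathcal{B}(\ell^p,\ell^q)=\mathcal{K}(\ell^p,\ell^q)$, reflexivity of $\ell^p,\ell^q$ feeds Theorem~\ref{Kalton} to make $\mathcal{B}(\ell^p,\ell^q)$ reflexive, and then Theorem~\ref{finite} upgrades compact to approximable. The paper itself states the corollary as an immediate combination of Pitt with Theorems~\ref{Kalton} and~\ref{finite}, so your write-up matches it in both content and spirit.
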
\medskip
\subsection*{Towards some partial converses of Corollary \ref{SD}}
We conclude this section with Theorem \ref{CSD} and Corollary \ref{CSD1} presenting some partial converses of Corollary \ref{SD}. To do this, we need the following auxiliary lemma.
%

\begin{theorem}\label{CSD}
Suppose that $E$ and $X$ are Banach spaces such that the identity $E\widehat{\otimes}X^*\cong \mathcal B(E,X)^*$ holds.
\begin{enumerate}[\hspace{1em}\rm (i)]
\item If $X$ is reflexive, then  $E$ is reflexive.
\item  If $X$ is non-reflexive and $\mathcal B(E,X)=\mathcal A(E,X)$, then $E$ is finite-dimensional.
\end{enumerate}
\end{theorem}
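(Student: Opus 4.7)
The plan is to dualise the hypothesis once and for all, then split on whether $X$ is reflexive. Taking duals of the assumed isometric isomorphism $E\widehat{\otimes}X^{*}\cong\mathcal{B}(E,X)^{*}$ and using the standard identity $(E\widehat{\otimes}X^{*})^{*}\cong\mathcal{B}(E,X^{**})$ (projective tensor--operator duality), I first obtain $\mathcal{B}(E,X)^{**}\cong\mathcal{B}(E,X^{**})$. A short pairing computation shows that under this identification the canonical embedding $\mathcal{B}(E,X)\hookrightarrow\mathcal{B}(E,X)^{**}$ corresponds to $T\mapsto\kappa_{X}\circ T$, where $\kappa_{X}:X\hookrightarrow X^{**}$ is the canonical embedding. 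For part (i), once $X$ is reflexive we have $X^{**}=X$, so the display collapses to $\mathcal{B}(E,X)^{**}\cong\mathcal{B}(E,X)$; hence $\mathcal{B}(E,X)$ is reflexive and Theorem \ref{finite} immediately yields the reflexivity of $E$. This part is essentially automatic once the set-up is in place.

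For part (ii), I argue by contradiction: assume $E$ is infinite-dimensional. Because $\mathcal{B}(E,X)=\mathcal{A}(E,X)$, the set-up above reads $\mathcal{A}(E,X)^{**}\cong\mathcal{B}(E,X^{**})$. Choose $x^{**}\in X^{**}\setminus X$, possible since $X$ is non-reflexive, and a nonzero $e^{*}\in E^{*}$, and consider $T_{0}:=e^{*}\otimes x^{**}\in\mathcal{B}(E,X^{**})$. Its range meets $X^{**}\setminus X$, so the corresponding element $\widetilde{T}_{0}\in\mathcal{A}(E,X)^{**}$ lies strictly outside the canonical image of $\mathcal{A}(E,X)$. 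Goldstine's theorem then supplies a bounded net $(T_{\alpha})\subset\mathcal{A}(E,X)$ whose images converge in the $w^{*}$-topology to $\widetilde{T}_{0}$ in $\mathcal{A}(E,X)^{**}$; testing against simple tensors $e\otimes x^{*}\in E\widehat{\otimes}X^{*}$ translates this into $T_{\alpha}(e)\to e^{*}(e)\,x^{**}$ in the $w^{*}$-topology of $X^{**}$ for every $e\in E$, with uniform norm bound on the $T_{\alpha}$.

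The hard part is to convert this pointwise $w^{*}$-convergence into a contradiction with the approximability (hence compactness) of each $T_{\alpha}$ combined with the infinite-dimensionality of $E$. This is precisely where the auxiliary lemma announced just before the theorem is designed to intervene: I would combine it with an infinite biorthogonal system in $E\times E^{*}$, or a Josefson--Nissenzweig type $w^{*}$-null sequence in $E^{*}$, to force the $T_{\alpha}$ to behave uniformly like $T_{0}$ on a bounded infinite family of test elements of $E\widehat{\otimes}X^{*}$. The total boundedness of $T_{\alpha}(B_{E})$ in $X$, combined with $x^{**}\notin X$, should then rule out such uniform behaviour. Controlling the passage from pointwise-$w^{*}$ to joint compactness is the tight step of the argument; everything else is a formal consequence of tensor--operator duality and Goldstine.
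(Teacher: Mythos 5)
Part (i) of your proposal is correct and is essentially the paper's argument: the chain $\mathcal B(E,X)\cong\mathcal B(E,X^{**})\cong(E\widehat{\otimes}X^*)^*\cong\mathcal B(E,X)^{**}$ followed by the embedding of $E^*$ (equivalently, an appeal to Theorem \ref{finite}); your remark that the canonical embedding corresponds to $T\mapsto\kappa_X\circ T$ is a worthwhile extra precision. Part (ii), however, is not a proof. The step you yourself label ``the hard part'' is the entire content of the statement, and the plan you sketch for it cannot close as described. The existence of $\widetilde T_0\in\mathcal A(E,X)^{**}$ outside the canonical image, together with a Goldstine net of approximable $T_\alpha$ with $T_\alpha(e)\to e^*(e)x^{**}$ weak$^*$, is not contradictory: it merely says $\mathcal A(E,X)$ is non-reflexive, which is true and expected for typical infinite-dimensional $E$ and non-reflexive $X$. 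Pointwise weak$^*$ limits of compact operators are routinely non-compact with range escaping $X$, so no biorthogonal system or Josefson--Nissenzweig sequence will manufacture a contradiction from that alone. Worse, the only consequence of the hypothesis you actually use is the dualized identity $\mathcal A(E,X)^{**}\cong\mathcal B(E,X^{**})$, which is formally consistent (it holds, for instance, whenever $E$ is finite-dimensional) and has already discarded the information that the \emph{predual} of $\mathcal B(E,X)$ is exactly $E\widehat{\otimes}X^*$. Note also that the ``auxiliary lemma announced just before the theorem'' to which you defer the missing step does not in fact appear in the paper, so nothing can be outsourced to it.

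The missing idea is the paper's use of Theorem \ref{ultrafilter} on the dual side. Since $E$ is assumed infinite-dimensional and $\mathcal B(E,X)=\mathcal A(E,X)$, Corollary \ref{aultrafilter} gives $\mathcal A(E,X)^*\cong(E\widehat{\otimes}X^*)_{\mathcal U}/W_0$ for a \emph{free, countably incomplete} ultrafilter $\mathcal U$ (Remark \ref{RS}(b)); combined with the hypothesis $E\widehat{\otimes}X^*\cong\mathcal B(E,X)^*=\mathcal A(E,X)^*$, this forces the canonical map $E\widehat{\otimes}X^*\to(E\widehat{\otimes}X^*)_{\mathcal U}/W_0$ to be onto. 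Countable incompleteness then lets one encode an arbitrary bounded sequence $(v_n)$ of $E\widehat{\otimes}X^*$ as a single element of the ultrapower, and surjectivity extracts a weak cluster point of $(v_n)$; Eberlein--\v{S}mulian then yields reflexivity of $E\widehat{\otimes}X^*$, hence of $\mathcal B(E,X)\hookrightarrow(E\widehat{\otimes}X^*)^*$, hence of $X$ --- the desired contradiction. Your proposal contains none of this mechanism, so part (ii) stands unproved.
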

\begin{proof}
(i) Suppose that $X$ is reflexive. Then from the identity $E\widehat{\otimes}X^*\cong \mathcal B(E,X)^*$, we arrive at
\[\mathcal B(E,X)\cong\mathcal B(E,X^{**})\cong(E\widehat{\otimes} X^*)^*\cong\mathcal B(E,X)^{**}, \]
and this shows that $\mathcal B(E,X)$ is reflexive. Now, as $E^*$ can  be embedded in $\B(E,X)$, it follows that $E$ is also reflexive. 

(ii) Suppose on the contrary that $E$ is infinite-dimensional. Then, by Theorem \ref{ultrafilter}, there exists an ultrafilter $\mathcal{U}$  such that $(E\widehat{\otimes} X^*)_\mathcal{U}/W_0\cong\mathcal A(E,X)^*,$ and Remark \ref{RS}(b) confirms that $\mathcal U$ is free and countably incomplete. From the assumption we then have
\[ E\widehat{\otimes}X^*\cong \mathcal B(E,X)^*= \mathcal A(E,X)^*\cong (E\widehat{\otimes} X^*)_\mathcal{U}/W_0.\]
A direct verification based on the above identification now shows that the canonical embedding $E\widehat{\otimes}X^*\hookrightarrow (E\widehat{\otimes} X^*)_\mathcal{U}/W_0$ is onto.

Suppose that  $(v_n)$ is a bounded sequence in  $E\widehat{\otimes}X^*$. Since $\mathcal{U}$ is countably incomplete, there exists a sequence $(I_n)$ of the subsets of $I$ such that $I_{n+1}\subseteq I_{n}$, for all $n$, and $\cap_{n=1}^\infty I_n=\emptyset$. We construct a net $(u_\alpha)$ in $E\widehat{\otimes}X^*$ by $u_\alpha=v_n$ when $i\in I_n\setminus I_{n+1}$ and $u_\alpha=0$ for $i\in I\setminus I_1$.
Since the canonical embedding $E\widehat{\otimes}X^*\hookrightarrow (E\widehat{\otimes} X^*)_\mathcal{U}/W_0$ is onto,  there exists $u\in E\widehat{\otimes}X^*$ such that $(u_\alpha-u)_\mathcal{U}\in W_0$, and it follows that $\lim_\mathcal{U}(u_\alpha-u)(T)=0$ for every  $T\in\mathcal{A}(E,X)=\mathcal{B}(E,X)$. This implies that $\text{weak-}\lim_\mathcal{U} u_\alpha =u$,  so the sequence $(v_n)$ has a weak cluster point. Now, Eberlein-$\check{\text{S}}$mulian theorem  (see \cite[V, 13.1]{C}) implies that $E\widehat{\otimes} X^*$ is reflexive. From the embedding  $\mathcal{B}(E,X)\hookrightarrow\mathcal{B}(E,X)^{**}\cong(E\widehat{\otimes}X^*)^*,$  we get  that $\mathcal{B}(E,X)$  is reflexive which in turn implies the reflexivity of $X$, that  is a contradiction.\smallskip
\end{proof}
It is worth to mention that in the proof of the second part of the latter result we have applied  Theorem \ref{ultrafilter}, which is a strengthened version of Corollary \ref{SD} itself.



\begin{corollary}\label{CSD1}
Suppose that $E$ is a Banach space. If  for every Banach space $X$ the identity $E\widehat{\otimes}X^*\cong \mathcal B(E,X)^*$ holds, then $E$ is finite-dimensional.
\end{corollary}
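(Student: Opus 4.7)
The plan is to apply Theorem~\ref{CSD} twice, once with a reflexive target and once with a cleverly chosen non-reflexive target, exploiting the freedom granted by the universal quantifier over $X$ in the hypothesis.

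First, I would apply part~(i) of Theorem~\ref{CSD} with $X$ any reflexive Banach space, for instance $X=\ell^2$. Since by hypothesis the identity $E\widehat{\otimes}X^*\cong\mathcal B(E,X)^*$ holds for this particular $X$, part~(i) immediately yields the reflexivity of $E$. This first step costs essentially nothing.

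For the second and decisive step, I would specialize to $X=\ell^1$, which is non-reflexive, and try to invoke part~(ii) of Theorem~\ref{CSD}. The only non-trivial point is verifying the hypothesis $\mathcal B(E,\ell^1)=\mathcal A(E,\ell^1)$, and this is where two classical facts do the work. Since $E$ is now known to be reflexive, its closed unit ball $B_E$ is weakly compact, so for every $T\in\mathcal B(E,\ell^1)$ the image $T(B_E)$ is weakly compact in $\ell^1$; the Schur property of $\ell^1$ upgrades weak compactness to norm compactness, giving $T$ compact and hence $\mathcal B(E,\ell^1)=\mathcal K(E,\ell^1)$. Moreover, $\ell^1$ has a Schauder basis and therefore the approximation property, so finite-rank operators are norm-dense in $\mathcal K(E,\ell^1)$, yielding $\mathcal K(E,\ell^1)=\mathcal A(E,\ell^1)$. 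Chaining the two equalities, $\mathcal B(E,\ell^1)=\mathcal A(E,\ell^1)$, and Theorem~\ref{CSD}(ii) applied with $X=\ell^1$ forces $E$ to be finite-dimensional.

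The main obstacle in this strategy is locating a \emph{non-reflexive} $X$ for which $\mathcal B(E,X)=\mathcal A(E,X)$ follows automatically from the reflexivity of $E$ alone; generic non-reflexive targets such as $c_0$ fail this (the canonical inclusion $\ell^2\hookrightarrow c_0$ is bounded but not compact). The choice $X=\ell^1$ resolves this cleanly by combining the Schur property (to pass from bounded to compact) with the approximation property of $\ell^1$ (to pass from compact to approximable), and beyond these two classical facts no further computation is required.
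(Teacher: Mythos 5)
Your proof is correct, but it diverges from the paper's in the decisive second step. You both begin identically: feed a reflexive $X$ into the hypothesis and invoke Theorem~\ref{CSD}(i) to get $E$ reflexive. The paper then takes $X=E$, uses the now-available identification $E^{**}=E$ to obtain $\mathcal B(E)^{**}\cong(E\widehat{\otimes}E^*)^*\cong\mathcal B(E,E^{**})\cong\mathcal B(E)$, and concludes from Corollary~\ref{finite1} that $E$ is finite-dimensional; it never touches part (ii) of Theorem~\ref{CSD}. You instead specialize to the non-reflexive target $X=\ell^1$ and verify the hypothesis $\mathcal B(E,\ell^1)=\mathcal A(E,\ell^1)$ of Theorem~\ref{CSD}(ii) by combining two classical facts: reflexivity of $E$ plus the Schur property of $\ell^1$ forces every bounded operator $E\to\ell^1$ to be compact (weak compactness of $T(B_E)$ upgrades to norm compactness via Eberlein--\v{S}mulian and Schur), and the approximation property of $\ell^1$ (it has a basis) makes every compact operator into $\ell^1$ approximable. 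Both routes are sound. The paper's argument stays entirely inside its own machinery (Corollary~\ref{finite1}, hence Theorem~\ref{finite}) and needs no particular properties of any concrete space; yours imports two standard Banach-space facts but has the virtue of actually exercising part (ii) of Theorem~\ref{CSD}, and your aside explaining why a generic non-reflexive target such as $c_0$ would not work shows you located the genuine constraint correctly.
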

\begin{proof}
If we use the  identity $E\widehat{\otimes}X^*\cong \mathcal B(E,X)^*$  for a reflexive Banach space $X$, then Theorem \ref{CSD}(i) implies that $E$ is  reflexive. If we serve the latter identity with $X=E$, we get
\[
\mathcal{B}(E)^{**}\cong (E\widehat{\otimes}E^*)^*\cong \mathcal{B}(E,E^{**})\cong \mathcal{B}(E).
\]
Thus, Corollary \ref{finite1} implies that $E$ is finite dimensional.
\end{proof}
\section{Some more applications of the  PLR}
We close the paper with some more  by-products of  the PLR, Theorem \ref{PLR}, that are motivated  by the techniques given in \cite{D}. We indeed  present some generalizations of the well-known  Goldstien weak$^*$-density theorem stating that, for a normed space $X,$ Ball$(X)$ is weak$^*$-dense in Ball$(X^{**})$ (see \cite[V, Proposition 4.1]{C}).

Before proceeding, let us recall the operator weak$^*$ ($ow^*$, for short)-topology on $\mathcal B(E,X^{**})$. For two normed spaces $E$ and $X,$ the $ow^*$-topology on $\mathcal B(E,X^{**})$ is induced by the family $\{e\otimes f\}_{e\in E, f\in X^*}$ of seminorms, where $(e\otimes f)(T)=|(T(e))(f)|$\quad ($T\in \mathcal B(E,X^{**})).$ Consequently, $T_\alpha\xrightarrow{ow^*} T$ if and only if $T_\alpha (e) \xrightarrow{{\rm weak}^*}T(e)$ for every $e\in E$.

We also need to recall the notion of injectivity for a normed space. A normed space $X$ is said to be injective if for every pair of normed spaces $F, E$ with $F\subseteq E$, every operator in $\B(F, X)$ has an extension in $\B(E, X)$ with the same norm.

 We come with the following auxiliary lemma.
\begin{lemma}\label{ext}
Let $E$ and $X$ be two Banach spaces and let $F$ be a finite-dimensional subspace of $E$. Then every bounded linear operator $T:F\longrightarrow X$ can be extended to a bounded linear operator $\overline{T}:E\to X.$
\end{lemma}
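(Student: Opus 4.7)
The plan is to exploit the finite-dimensionality of $F$ to reduce the extension problem for an operator-valued map to a finite number of extension problems for scalar-valued functionals, which are handled by the classical Hahn--Banach theorem. Notice that we do not need $X$ to be injective here (in contrast with what one might first expect): the burden of the extension is placed on the domain side, not the target side, precisely because $\dim F<\infty$.

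More concretely, the first step is to fix an algebraic basis $\{e_1,\ldots,e_n\}$ of $F$ and to consider the associated coordinate functionals $f_i:F\to\mathbb{K}$ determined by $f_i(e_j)=\delta_{ij}$. Each $f_i$ is automatically continuous on $F$ because any two norms on a finite-dimensional space are equivalent. The second step is to invoke the Hahn--Banach theorem to extend each $f_i$ to a bounded linear functional $\widetilde f_i\in E^*$ (norm preservation is available but not needed for the conclusion).

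The third step is to define $\overline T:E\to X$ by
\[
\overline T(x)\;=\;\sum_{i=1}^{n}\widetilde f_i(x)\,T(e_i),\qquad x\in E.
\]
This is manifestly linear, and it is bounded with $\|\overline T\|\le \sum_{i=1}^n\|\widetilde f_i\|\,\|T(e_i)\|<\infty$; in fact $\overline T$ is of finite rank since its image lies in $T(F)$. Finally, for $x\in F$ one writes $x=\sum_{i=1}^n f_i(x)e_i$, so that $\widetilde f_i(x)=f_i(x)$ and hence $\overline T(x)=\sum_{i=1}^n f_i(x)T(e_i)=T(x)$, confirming that $\overline T$ extends $T$.

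There is essentially no obstacle to overcome; the only point that deserves emphasis is conceptual, namely that the finite-dimensionality of $F$ allows one to convert the operator-extension question into the scalar-extension question settled by Hahn--Banach, so that the injectivity of $X$ plays no role. This is in contrast to the infinite-dimensional case, where one must either assume $X$ is an $\mathcal{L}_\infty$-space (or another injective target) or else settle for approximate extensions such as those furnished by the PLR.
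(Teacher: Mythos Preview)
Your proof is correct. It differs from the paper's argument, which instead fixes an algebraic complement $F'$ of $F$ in $E$ (so that $E=F\oplus F'$), shows by a compactness argument on the finite-dimensional sphere of $F$ that the associated linear projection $P:E\to F$ along $F'$ is bounded, and then sets $\overline T=T\circ P$. Both routes ultimately rest on the same fact, namely that a finite-dimensional subspace is always complemented; you reach it via Hahn--Banach applied to the coordinate functionals (so $P(x)=\sum_i\widetilde f_i(x)e_i$), whereas the paper reaches it by a direct sequential-compactness argument that avoids Hahn--Banach. Your approach is shorter and more streamlined, and it yields the additional observation that $\overline T$ can be taken of finite rank; the paper's approach is a touch more self-contained but needs the somewhat ad hoc boundedness estimate for the projection.
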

\begin{proof}
There exists a closed subspace  $F'$ of $E$ such that $E=E\oplus F'.$ We claim that the set $\{x\in F: x+x'\in \hbox{Ball}(E), \hbox{for\ some}\ x'\in F'\}$ is bounded in $E$. Suppose, on the contrary, that  $(x_n+x'_n)$ be a sequence in ${\rm Ball}(E)$ with $(x_n)\subseteq F$ and $(x'_n)\subseteq F'$ such that   $\| x_n\|\to\infty$. Since $F$ is finite-dimensional, the sequence $(x_n/\|x_n\|)$ has a convergent subsequence. We, may,  assume that $x_n/\|x_n\|\to x_0$, then $\|x_0\|=1$. On the other hand  $(x_n+x'_n)/\|x_n\|\to 0$, so  we arrive at $x'_n/\|x_n\|\to -x_0$. This follows that $x_0\in F\cap F'=\{ 0\}$, which is a contradiction. Suppose that $M>0$ is the required  bound for the above mentioned subset of $F$.

 Now, if $T:F\to X$ is a bounded linear operator, then $T$ can be extended to a bounded linear operator $\overline{T}:E\to X$. Indeed, we just need to define $\overline{T}(x+x')=T(x)$, for each $x\in F$ and $x'\in F'$. Thus
 \[
 \|\overline{T}\| =\sup_{\|x+x'\|\leq 1} \|\overline{T}(x+x')\|= \sup_{\|x+x'\|\leq 1} \|T(x)\|\leq \sup_{\|x\|\leq M}\| T(x)\|\leq M\|T\|.
 \]
 Thus $\overline T$ is bounded, as required.
\end{proof}
We are now ready to give the following result, the  part (iii)  of which is an improvement of a result of  Lima and Oja {\cite[Corollary 2.11]{LO}}.
\begin{theorem} \label{Goldstine}
Let $E$ and $X$ be two normed spaces. The following statements hold.
\begin{enumerate}[\hspace{1em}\rm (i)]
\item $\overline{\B(E,X)}^{ow^*}=\B(E,X^{**}).$
\item $\overline{{\rm Ball}(\B(E,X))}^{ow^*}={\rm Ball}(\B(E,X^{**})),$ when $X$ is injective.
\item $\overline{{\rm Ball}(\F(E,X))}^{ow^*}={\rm Ball}(\F(E,X^{**}));$.
\end{enumerate}
\end{theorem}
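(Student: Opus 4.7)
My plan is to treat all three parts uniformly with the PLR as the engine, much in the spirit of Dean's applications recalled in the Introduction. The nontrivial inclusion in each case is the one that places an arbitrary $T\in\B(E,X^{**})$ (respectively in the unit ball, respectively of finite rank) into the $ow^*$-closure of operators valued in $X$; the reverse inclusions are either automatic from $X\hookrightarrow X^{**}$ or follow from the $ow^*$-lower semicontinuity of each seminorm $e\otimes f$, which forces the unit ball of $\B(E,X^{**})$ to be $ow^*$-closed.

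The common ingredient is the following. Fix $T\in\B(E,X^{**})$ and a basic $ow^*$-neighborhood of $T$ determined by $\{e_1,\dots,e_n\}\subseteq E$, $\{f_1,\dots,f_m\}\subseteq X^{*}$, and $\varepsilon>0$. Set $F:=\langle T(e_1),\dots,T(e_n)\rangle\subseteq X^{**}$ and $G:=\langle f_1,\dots,f_m\rangle\subseteq X^{*}$; by the PLR there is a linear embedding $S_0:F\to X$ with $\|S_0\|,\|S_0^{-1}\|<1+\varepsilon$ and $f_j(S_0(T(e_i)))=T(e_i)(f_j)$ for all $i,j$. Thus $S_0\circ T$, restricted to $\langle e_1,\dots,e_n\rangle$, is a bounded operator into $X$ that exactly reproduces $T$ on the prescribed seminorms.

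For (i), I would invoke Lemma \ref{ext} to extend this restriction to some $S\in\B(E,X)$; then $S(e_i)(f_j)=T(e_i)(f_j)$, establishing density. For (ii), assume $\|T\|\le 1$. Since Lemma \ref{ext} offers no norm control, I would instead use the injectivity of $X$ to extend to $S:E\to X$ with $\|S\|\le(1+\varepsilon)\|T\|\le 1+\varepsilon$, then rescale to $S':=S/(1+\varepsilon)\in\operatorname{Ball}(\B(E,X))$. The resulting seminorm error $|T(e_i)(f_j)|\,\varepsilon/(1+\varepsilon)\le \|e_i\|\,\|f_j\|\,\varepsilon$ is absorbed by shrinking $\varepsilon$ at the outset. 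For (iii), the finite-rank hypothesis eliminates the extension step altogether: $T\in\F(E,X^{**})$ has finite-dimensional range, so taking $F:=T(E)$ in the PLR yields $S:=S_0\circ T\in\F(E,X)$ already defined on all of $E$ with $\|S\|\le(1+\varepsilon)\|T\|$, and the same rescaling finishes the argument.

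The only real obstacle is the norm bookkeeping: PLR produces $(1+\varepsilon)$-isomorphisms, not isometries, so the approximants must be rescaled and one has to check that the rescaling does not spoil the $ow^*$-approximation — which it does not, since the error is $O(\varepsilon)$ in each seminorm. The structural choice across the three parts is simply how to get the approximant defined on all of $E$ with usable norm bounds: raw extension via Lemma \ref{ext} suffices for (i), injectivity of $X$ is needed to extend isometrically for (ii), and the finite-dimensional range of $T$ bypasses extension altogether in (iii).
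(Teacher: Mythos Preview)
Your proposal is correct and follows essentially the same approach as the paper: apply the PLR to the finite-dimensional data of a basic $ow^*$-neighborhood, then use Lemma~\ref{ext} for (i), injectivity of $X$ for (ii), and the finite-dimensional range of $T$ for (iii), rescaling by $(1+\varepsilon)^{-1}$ where a norm bound is needed. The only cosmetic difference is that the paper extends the PLR map $S_\alpha$ on the codomain side (from $T(\langle A_\alpha\rangle)\subseteq X^{**}$ to all of $X^{**}$) and then composes with $T$, whereas you extend the composite $S_0\circ T$ on the domain side (from $\langle e_1,\dots,e_n\rangle\subseteq E$ to all of $E$); both routes give the same approximant.
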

\begin{proof}
Let $T\in \B(E,X^{**})$ with $\|T\|=1$ and let $I$ be the set of all
\[
N(T,A,B,\varepsilon)=\lbrace S : |(S-T) (a)(f)|<\varepsilon, a\in A, \ f\in B\rbrace,
\]
where $0<\varepsilon\leq 1$ and $A, B$ are finite subsets of ${\rm Ball(E)}$ and ${\rm Ball(X^*)},$ respectively.
We define the  partial order $\leq$ on $I$ by
\[
N(T,A_1,B_2,\varepsilon_1)\leq N(T,A_2,B_2,\varepsilon_2) \quad \text{if and only if} \quad A_1\subseteq A_2, B_1\subseteq B_2, \varepsilon_2\leq\varepsilon_1.
\]
Then for every $N(T, A_\alpha,B_\alpha,\varepsilon_\alpha)\in I,$ by the PLR, (Theorem \ref{PLR}), there exists a one-to-one linear map $S_\alpha:T(\langle A_\alpha\rangle)\to X$ such that $\|S_\alpha\|, \|S_\alpha^{-1}\|<1+\varepsilon,$ $S_\alpha|_{T(\langle A_\alpha\rangle)\cap X}=id_{T(\langle A_\alpha\rangle)\cap X}$, and $f(S_\alpha(e))=e(f)$ for all $e\in T(\langle A_\alpha\rangle)$ and $f\in B.$\smallskip

To prove (i),  for each $\alpha$ let $\overline{S_\alpha}: X^{**}\to X$ be that  extension of $S_\alpha$ whose existence was  guaranteed  by Lemma  \ref{ext}. Then we  define the  bounded linear operator $T_\alpha:=(1+\varepsilon_\alpha)^{-1}\overline{S_\alpha}\circ T$.  The construction of $(T_\alpha)$ implies that $T_\alpha\in N(T,A_\beta,B_\beta,\varepsilon_\beta)$ when $\alpha\geq\beta,$ from which we have $T_\alpha\xrightarrow{ow^*} T$.\smallskip

For (ii), since $X$ is injective, the operator  $S_\alpha$ can be extended to a bounded linear operator $\overline{S_\alpha}: X^{**}\to X$ with the same norm,  for each $\alpha\in I$. If we set $T_\alpha :=(1+\varepsilon_\alpha)^{-1}\overline{S_\alpha}\circ T$, then it  is clearly a  bounded linear operator with $\|T_\alpha\|\leq \|T\|$. We further have the inequality
  \[\|T_\alpha\|=\|(1+\varepsilon_\alpha)^{-1}\overline{S_\alpha}\circ T\|\geq (1+\varepsilon_\alpha)^{-2} \|T\|, \] from which we get $\|T_\alpha\|\to\|T\|$. The construction of $(T_\alpha)$ now  implies that $T_\alpha\in N(T,A_\beta,B_\beta,\varepsilon_\beta)$ when $\alpha\geq\beta.$ We thus have $T_\alpha\xrightarrow{ow^*} T$.\smallskip

For (iii), we just need to note that when $T\in \F(E,X^{**})$, then there exists a finite subset $A_0$ of $E$ such that  $T(\langle A_0\rangle)=T(E)$. Fix some $f\in {\rm Ball(X^*)},$ and set   $T_\alpha:=(1+\varepsilon_\alpha)^{-1}S_\alpha\circ T,$ for each  $\alpha$ with $N(T, A_\alpha,B_\alpha,\varepsilon_\alpha)\geq N(T, A_0,\{ f\},1)$. Then it follows from the definition that   $T_\alpha\in \F(E,X)$, and a similar argument as part (ii) reveals that $\|T_\alpha\|\to\|T\|$ and $T_\alpha\xrightarrow{ow^*} T$, as claimed.

\end{proof}


\end{document}